\newtheorem{theorem}{Theorem}[section]
\newtheorem{lemma}[theorem]{Lemma}
\newtheorem{proposition}[theorem]{Proposition}
\newtheorem{definition}[theorem]{Definition}
\theoremstyle{definition}
\newtheorem{example}[theorem]{Example}
\newenvironment{remark}% 
  {\par\medbreak\refstepcounter{theorem}%
    \noindent\textbf{Remark~\thetheorem. }}%
  {\qed\par\medskip}
  \newcommand{\eqnum}{\leavevmode\hfill\refstepcounter{equation}\textup{\tagform@{\theequation}}}
\numberwithin{equation}{section}
\def\EEE{\color{black}}
\def\Q{\mathcal Q}
\def\cA{\mathcal A}
\def\scrE{\mathscr G}
\def\cV{\mathcal V}
\def\D{\mathcal D}
\def\E{\mathcal E}
\def\S{\mathcal S}
\def\Z{\mathcal Z}
\def\M{\mathcal M}
\let\e\varepsilon
\def\cH{\mathscr H}
\def\cL{\mathscr L}
\def\bP{\mathbb P}
\def\R{\mathbb R}
\def\P{\mathcal P}
\def\dual#1#2{\left\langle #1,#2\right\rangle}
\DeclareMathOperator\Prob{Prob}
\def\Expectation{{\mathbb E}}
\def\Indicator{\mathds{1}}
\def\Ent{\mathrm{Ent}}
\renewcommand{\div}{\mathop{\mathrm{div}}}
\DeclareMathOperator\lapl{\Delta\!}
\DeclareMathOperator\grad{\nabla\!}
\DeclareMathOperator*\argmin{argmin}
\DeclareMathOperator\Dom{Dom}
\begin{document}

\title{On the relation between gradient flows and the large-deviation principle, with applications to Markov chains and diffusion}
\author[1,2]{A. Mielke}
\author[3]{M. A. Peletier}
\author[1,*]{D.R.M. Renger}
\affil[1]{Weierstra\ss-Institut, Mohrenstra\ss e 39, 10117 Berlin, Germany}
\affil[2]{Institut f{\"u}r Mathematik, Humboldt-Universit{\"a}t zu Berlin, Rudower Chaussee 25, 12489 Berlin (Adlershof), Germany}
\affil[3]{Department of Mathematics and Computer Science, and Institute for Complex Molecular Systems (ICMS), Eindhoven University of Technology, P.O. Box 513, 5600 MB Eindhoven, the Netherlands}
\affil[*]{Corresponding author; michiel-renger@wias-berlin.de, tel: +49\,30\,20372\,470, fax: +49\,30\,20372\,311}

\date{\today}
\maketitle

\begin{abstract}
Motivated by the occurence in rate functions of time-dependent large-deviation principles, we study a class of non-negative functions $\cL$ that induce a flow, given by $\cL(\rho_t,\dot\rho_t)=0$. We derive necessary and sufficient conditions for the unique existence of a generalized gradient structure for the induced flow, as well as explicit formulas for the corresponding driving entropy and dissipation functional. In particular, we show how these conditions can be given a probabilistic interpretation when $\cL$ is associated to the large deviations of a microscopic particle system. Finally, we illustrate the theory for independent Brownian particles with drift, which leads to the entropy-Wasserstein gradient structure, and for independent Markovian particles on a finite state space, which leads to a previously unknown gradient structure.
\end{abstract}

\noindent\textbf{Keywords:} generalized gradient flows, large deviations, convex analysis, particle systems.
\noindent\textbf{MSC codes:} 35Q82, %PDEs in connection with statistical mechanics
                             35Q84, %Fokker-Planck equations
                             49S05, %Variational principles of physics
                             60F10, %Large deviations
                             60J25, %Continuous-time Markov processes on general state spaces
                             60J27. %Continuous-time Markov processes on discrete state spaces

\section{Introduction}

Gradient flows are an important subclass of evolution
equations. A gradient-flow structure can be exploited to obtain
additional information about the evolution, such as existence and
stability of solutions (see e.g.\ the survey \cite{Ambrosio2008}),
or to derive limiting equations if some system parameter goes to zero
\cite{Sandier2004,Arnrich2012,Miel13?DAEE}. Moreover, gradient flows can provide
additional physical and analytical insight, such as the maximum dissipation of
entropy and energy, or the geometric structure induced by the dissipation distance.

A special subclass is formed by gradient flows with respect to a Wasserstein-type metric. This class was first identified in the seminal work by Jordan, Kinderlehrer, and Otto~\cite{Jordan1998,Otto2001} and later shown to encompass a wide range of known and unknown partial differential equations~\cite{Ambrosio2008,MatthesMcCannSavare09,PortegiesPeletier10,Mielke2011a,LieMie12?GSGC}. For a class of evolution equations defined on discrete spaces such as graphs, various generalizations exist that unite both continuous and discrete spaces into the same structure~\cite{Maas2011,Mielke2012a,Chow2012}.

In many of these Wasserstein gradient flows entropy terms are present in the driving functional, which leads to diffusive behavior in the evolution. The notion of entropy is inherently related to microscopic particle systems, through Boltzmann's entropy formula, or through Sanov's large-deviation theorem. This suggests that a gradient flow that is driven by entropy may also be related to a microscopic particle system through its large deviations. In our earlier work we showed such a connection between large deviations and gradient flows, for a number of systems: the standard Fokker-Planck equation~\cite{Adams2011,Dirr2012,Duong2013a}, a Fokker-Planck equation with decay~\cite{Peletier2013}, a generalized Kramers equation~\cite{Duong2012a} and the Vlasov-Fokker-Planck equation~\cite{Duong2013}. The paper~\cite{Adams2012} and the dissertation~\cite{Renger2013} discuss these and other examples in more detail.

In all these examples, the large-deviation rate functional can somehow be connected to a variational formulation of the limiting, deterministic evolution equation. In some cases this variational formulation is a gradient flow~\cite{Adams2011}; in other cases it can be recognized as a natural generalization of the gradient-flow concept~\cite{Adams2012,Duong2013}; and in yet other cases it appears to have a different structure altogether~\cite{Peletier2013}. 
This all raises the question, that we address in this paper, 

\begin{itemize}
\item[] \emph{Does there exist a general connection between large deviations and (generalizations of) gradient flows?} \eqnum\label{question}
\end{itemize}

Although this question is framed in terms of large deviations, the only ingredient from large deviations that we will actually use below is the variational form of the large-deviation rate function. To reflect this generality, we will use a more abstract point of view, assuming that some source of understanding (large deviations or otherwise) provides a variational formulation of an evolution equation in terms of what we call an \emph{L-function}.

\subsection{L-functions}
%\label{subsec:L-functions}

Let $\Z$ be some continuous state space, and let $\cL(z,s)$ be a function of the state $z\in \Z$ and tangent vector $s$ at $z$ (these concepts are made more precise in Section~\ref{sec:L-functions to generalized flows}). 

\begin{definition}We call~$\cL$ an L-function if 
\begin{enumerate}[(i)]
  \item $\cL\geq0$,
  \item $\cL(z,\,\cdot\,)$ is convex for all $z$,
  \item $\cL$ induces an evolution equation $\dot z_t=\cA_\cL(z_t)$, in the sense that for all $(z,s)$,
\begin{equation}
\label{eq:L-function minimization}
  \cL(z,s)=0  \qquad \iff \qquad s = \cA_\cL(z)
\end{equation}
\end{enumerate}
(writing $t\mapsto z_t$ for a time-parametrized curve in $\Z$).
\label{def:L-function}
\end{definition}

The evolution induced by an L-function $\cL$ is a distinguished subclass of the Lagrangian flow, described by the Euler-Lagrange equation 
\begin{equation}
  D_z\cL(z_t,\dot z_t) - \frac{d}{dt}\big(D_s\cL(z_t,\dot z_t)\big)=0,
\label{eq:Euler-Lagrange}
\end{equation}
where $D_z$ and $D_s$ are derivatives with respect to the first and second variable respectively. For an evolution induced by an L-function, we have that both $D_s\cL(z_t,\dot z_t)=0$ and $D_z\cL(z_t,\dot z_t)=0$, hence \eqref{eq:Euler-Lagrange} is indeed satisfied. Note however, that the induced evolution $\dot z_t=\cA_\cL(z_t)$ is a first-order equation with respect to time, whereas \eqref{eq:Euler-Lagrange} is of second order and has many more solutions.

L-functions and their induced evolutions are abundant; they not only appear in the large deviations of stochastic processes, but are also well known from Hamiltonian mechanics. In Section~\ref{subsec:L-functions from large deviations} we describe how such functions arise from large deviations. 

Much of the theory in this paper requires only the three defining properties of L-functions. Therefore, in the general setting, we look for a connection between L-functions and gradient flows. At the other side of the connection, we will need will need a generalization of the gradient-flow concept, as we explain in the next section.

\subsection{Generalized gradient systems}
\label{subsec:generalized gradient flows}

We first recall the Riemannian description of a gradient flow. For a brief moment, let $\Z$ be a Riemannian manifold with tangent and cotangent spaces $T\Z$ and $T^*\Z$ and symmetric, positive definite metric tensor $G(z):T_z^*\Z\to T_z\Z$. A gradient flow of an entropy functional $\S:\Z\to\R$ in the space $\Z$ is then described by
\begin{equation}
\label{eq:Riemannian gradient flow}
  G(z_t)\dot z_t = -D\S(z_t) \qquad\text{or equivalently}\qquad \dot z_t = -G(z_t)^{-1}D\S(z_t),
\end{equation}
for all $t\in\lbrack0,T\rbrack$. This can be alternatively formulated by defining, for $z\in\Z, s\in T_z\Z$ and $\xi\in T^*_z\Z$,
\begin{align}
\label{eq:quadratic Psi Psi*}
  \Psi(z,s):=\frac12\langle G(z)s,s\rangle &&\text{and its convex dual}&&   \Psi^*(z,\xi):=\frac12\langle \xi,G(z)^{-1}\xi\rangle,
\end{align}
where $\langle\cdot,\cdot\rangle$ denotes the dual pairing between $T_z^*\Z$ and $T_z\Z$. If $G(z)$ is linear and symmetric, then \eqref{eq:Riemannian gradient flow} can be written as
\begin{equation}
\label{eq:Psi* gradient flow}
  D_s\Psi(z_t,\dot z_t) = -D\S(z_t) \qquad\text{or equivalently}\qquad  \dot z_t = D_\xi\Psi^*\big(z_t,-D\S(z_t)\big),
\end{equation}
which, by Fenchel-Young theory, is equivalent to requiring
\begin{equation}
\label{eq:Psi Psi* formulation}
  \Psi(z_t,\dot z_t) + \Psi^*\big(z_t,-D\S(z_t)\big) + \langle D\S(z_t),\dot z_t\rangle=0.
\end{equation}

In the more general setting of this paper, we are after formulations of the form~\eqref{eq:Psi Psi* formulation}, where $\Z$ can be a general linear space, but $\Psi$ and $\Psi^*$ need not be of the quadratic form~\eqref{eq:quadratic Psi Psi*}. Such formulations are also called $\Psi$-$\Psi^*$-structures, entropy-dissipation inequalities or entropy balances; see~\cite{DeMaTo80PEMS, Degi93NPMM, ColVis90CDNE, Luckhaus1995, Mielke2008, Mielke2011, Mielke2011a}. Here, $\Psi$ and $\Psi^*$ need to satisfy a number of conditions in order to yield a meaningful generalized formulation; these are summarized in the following definition.
\begin{definition}
We call the triple $(\Z,\Psi,\S)$ a \emph{generalized gradient system} if $\S:\Z\to\R$, $\Psi:T\Z\to\R$ and for all $z\in\Z$:
\begin{enumerate}
\item[(i)] $\Psi(z,\cdot)$ is convex in the second argument,
\item[(iia)] $\min\Psi(z,\cdot)=0$, and
\item[(iib)] $\Psi(z,0)=0$.
\end{enumerate}
In addition, $\Psi$ is called \emph{symmetric} if $\Psi(z,s)=\Psi(z,-s)$ for all $(z,s)\in T\Z$.

A curve $z:\lbrack0,T\rbrack\to\Z$ satisfying \eqref{eq:Psi Psi* formulation} is then called a flow induced by the gradient system $(\Z,\Psi,\S)$. 
\label{def:gradient system}
\end{definition}

To explain the motivation behind this definition, observe that Condition~\emph{(i)} guarantees the duality between $\Psi$ and its dual $\Psi^*$, i.e.
\begin{align}
  \Psi^*(z,\xi)=\sup_{s\in T_z\Z} \langle \xi,s\rangle - \Psi(z,s), &&\text{and} &&\Psi(z,s)=\sup_{\xi\in T_z^*\Z} \langle \xi,s\rangle - \Psi^*(z,\xi),
\label{eq:Psi-Psi* dual pair}
\end{align}
so that formulations~\eqref{eq:Psi* gradient flow} and \eqref{eq:Psi Psi* formulation} are still equivalent.

With this duality we have the following equivalences for Conditions~\emph{(iia)} and \emph{(iib)}
\begin{subequations}
\begin{align}
  &\min\Psi(z,\cdot)=0                 && \iff && \Psi^*(z,0)=0,            \label{eq:min Psi iff Psi*}\\
  &\Psi(z,0)=0                         && \iff && \min\Psi^*(z,\cdot)=0     \label{eq:Psi iff min Psi*},
\end{align}
\end{subequations}
so that both $\Psi\geq0$ and $\Psi^*\geq0$. This is a central part of the definition; it implies that for curves satisfying~\eqref{eq:Psi Psi* formulation}, we have $\frac{d}{dt}S(z_t)=\langle D\S(z_t),\dot z_t\rangle\leq0$. Moreover, we see that Conditions~\emph{(iia)} and \emph{(iib)} together imply that the dissipation $\Psi(z,s)$ per unit of time is minimal when the generalized velocity~$s$ is zero, and that stationary points of $\S$ are stationary points of the evolution; this can be seen from writing the evolution as $\dot z_t = D_\xi\Psi^*\big(z_t,-D\S(z_t)\big)$.

Finally, concerning the symmetry of $\Psi$: this property is natural in many systems, in which forward and backward velocities lead to the same dissipation (exceptions arise e.g. in plasticity~\cite[Sect.\,3]{Miel03EFME}) or friction~\cite[Eqn.\,(2.2)]{RoSoVo13}). Note that $\Psi$ is symmetric if and only if $\Psi^*$ is symmetric.

The non-negativity, duality and symmetry of $\Psi$ and $\Psi^*$ will play a central role in this paper.

\subsection{From L-functions to generalized gradient flows}

The question~\eqref{question} can now be made more concrete. For each case that we studied, we showed that there is a deeper connection between the L-function and the gradient flow: not only does the zero-level set of $\cL$ generate the evolution $\dot z_t = \cA(z_t)$ (i.e.~\eqref{eq:L-function minimization}), but in fact the {whole} function $\cL$ can be \emph{identified} with the gradient-flow structure for the evolutions, in some exact or approximate sense. This is remarkable, since the function $\cL$ contains much more information than its zero-level set does, and this additional information does not seem relevant for the evolution $\dot z_t = \cA(z_t)$. Inspired by these examples, we make Question~\eqref{question} more precise as
\begin{itemize}
\item[] \emph{For a given L-function $\cL$, can we find a generalized gradient system $(\Z,\Psi,\S)$ that induces the same evolution as $\cL$, and somehow preserves the structure provided by~$\cL$?}
\end{itemize}
%Is there a general connection between L-functions and generalized gradient flows?

Observe that, by Fenchel's inequality, \eqref{eq:Psi Psi* formulation} is always non-negative. Therefore, \eqref{eq:Psi Psi* formulation} describes a  principle similar to~\eqref{eq:L-function minimization}. In Section~\ref{sec:L-functions to generalized flows} we exploit this similarity by constructing, for a large class of L-functions, a generalized gradient system $(\Z,\Psi,\S)$ such that
\begin{equation}
\label{eq:L=Psi Psi* intro}
  \cL(z_t,\dot z_t) = \Psi(z_t,\dot z_t) + \Psi^*\big(z_t,-D\S(z_t)\big) + \langle D\S(z_t),\dot z_t\rangle.
\end{equation}
Once we find such a relation, we can conclude that 
\begin{itemize}
\item the evolution induced by $\cL$ is the generalized gradient flow induced by the system $(\Z,\Psi,\S)$;
\item the cost $\cL$ to deviate from the optimal curve is the same as the `gradient-system defect', described by the right-hand side of \eqref{eq:L=Psi Psi* intro};
\item in this sense, the structure of the generalized gradient system is consistent with $\cL$---in fact it is \emph{equivalent} to $\cL$, in the sense that $\cL$ can be constructed out of $(\Z,\Psi,\S)$, and vice versa.
\end{itemize}

In our first main result, Theorem~\ref{th:L function to generalized gradient system}, we will see the surprising fact that the existence of a generalized gradient system for which \eqref{eq:L=Psi Psi* intro} holds is equivalent to the `integrability condition'
\begin{equation}
\text{there exists }\S:\Z\to\R \text{ such that }  D_s\cL(z,0)=D\S(z) \text{ for all } z.
\label{eq:integrability condition}
\end{equation}
In addition, we will see that the symmetry of the resulting potentials $\Psi$ and $\Psi^*$ is equivalent to the `time-symmetry condition'
\begin{equation}
  \cL(z,s)-\cL(z,-s)=2\langle D\S(z),s\rangle \qquad \text{for all } z,s.
\label{eq:time-symmetric L intro}
\end{equation}
The two conditions~\eqref{eq:integrability condition} and \eqref{eq:time-symmetric L intro} are illustrated graphically in Figures~\ref{fig:integrability condition} and \ref{fig:time-symmetric L-function}. For L-functions from large deviations of empirical processes, as we explain in Section~\ref{subsec:L-functions from large deviations} below, we will interpret both conditions in terms of properties of the underlying stochastic process (Section~\ref{subsec:integrability and time-symmetry}).

%\bigskip

%The typical example to keep in mind \MAP{I find this statement confusing - half of our two examples does not fit this description! And in addition, we felt and feel that the non-quadratic case is more interesting than the quadratic case. Suggestion: remove it here, and give explicit formulas for the L-functions of the two examples in Section 1.5} is a quadratic L-function, defined through a Riemannian metric tensor $G$ by
%\begin{equation}
%\label{eq:quadratic L-function}
%  \cL(z,s):=\frac12\big\langle G(z)\big(s-\cA_\cL(z)\big),s-\cA_\cL(z)\big\rangle,
%\end{equation}
%which clearly induces the evolution $\dot z_t=\cA_\cL(z_t)$. The integrability condition~\eqref{eq:integrability condition} then says that
%\begin{equation*}
%  -G(z)\cA_\cL(z)=D\S(z)
%\end{equation*}
%for some functional $\S$. In that case the induced evolution coincides with the classical Riemannian gradient flow~\eqref{eq:Riemannian gradient flow}. In addition, we see that for a quadratic L-function~\eqref{eq:quadratic L-function} the time-symmetry condition is always satisfied:
%\begin{equation*}
%  \cL(z,s)-\cL(z,-s)=2\langle G(z)\cA(z),s\rangle.% = 2\langle D\S(z),s\rangle
%\end{equation*}
%Indeed, a quadratic L-function~\eqref{eq:quadratic L-function} will yield quadratic potentials of the form~\eqref{eq:quadratic Psi Psi*}, which are always even.

\begin{figure}
\centering
\begin{minipage}{7cm}
\centering
\begin{tikzpicture}[scale=1.2]
  \tikzstyle{every node}=[font=\small]
  \draw[->] (0,0)--(3,0) node[anchor=west]{$s$};
  \draw (1,2)--(1,-1);
  \draw (0,1.3)..controls (0.3,0.9) and (0.7,0.55) ..(1,0.333) ..controls (1.2,0.2) and (1.6,0).. (2,0).. controls (2.4,0) and (2.8,0.3) ..(3,0.5) node[anchor=south]{$\cL(z,s)$};
  \draw (0,1)-- node[anchor=north,sloped,pos=0.20]{$\langle D\S(z),s\rangle$} (3,-1);
  \draw [fill=black] (1,0.333) circle (0.05);
  \draw [fill=black] (2,0) node[anchor=south]{$\cA(z)$} circle (0.05);
\end{tikzpicture}
\caption{integrability condition.}
\label{fig:integrability condition}
\end{minipage}
\qquad
\begin{minipage}{7cm}
\centering
\begin{tikzpicture}[scale=1.2]
  \tikzstyle{every node}=[font=\small]
  \draw[->] (0,0)--(3,0) node[anchor=west]{$s$};
  \draw (1,2)--(1,-1);
  \draw (0,1.7)..controls (0.3,0.9) and (0.7,0.55) ..(1,0.333) ..controls (1.2,0.2) and (1.6,0).. (2,0).. controls (2.4,0) ..(3,0.1) node[anchor=south]{$\cL(z,s)$};
  \draw (0,1)-- node[anchor=north,sloped,pos=0.20]{$\langle D\S(z),s\rangle$} (3,-1);
  \draw [fill=black] (1,0.333) circle (0.05);
  \draw [fill=black] (2,0) node[anchor=south]{$\cA(z)$} circle (0.05);
  \draw[dotted] (1,0.333)--(2.111,2);
\end{tikzpicture}
\caption{time-symmetry condition.}
\label{fig:time-symmetric L-function}
\end{minipage}
\end{figure}

\subsection{L-functions from large deviations}
\label{subsec:L-functions from large deviations}
%The work by Feng and Kurtz recently showed a new, broad, and important source of such formulations that arise from large deviations \cite{Feng2006}.

In Section~3 we study a specific class of L-functions, namely those that arise from large deviations of empirical processes, as we will now explain.

Let $X_1(t),X_2(t),\hdots$ be a sequence of independent Markov processes in a state space $\Omega$, that all have the same linear generator $\Q:\Dom\Q\to C_b(\Omega)$ with $\Dom\Q\subset C_b(\Omega)$, where $C_b(\Omega)$ denotes the space of bounded, continuous functions on $\Omega$. Then, under suitable initial conditions, the empirical process
\begin{equation}
\label{eq:empirical process}
  \rho^{(n)}:t\mapsto\frac1n\sum_{k=1}^n \delta_{X_k(t)}, \qquad t\in[0,T]
\end{equation}
converges almost surely as $n\to\infty$ to the deterministic evolution $\rho:[0,T]\to \mathcal{P}(\Omega)$ satisfying \cite[Th.~11.4.1]{Dudley1989}
\[
  \frac d{dt} \int_\Omega\!\phi(x)\rho_t(dx) = \int_\Omega\! (\Q\phi)(x) \rho_t(dx) \qquad \text{for all } \phi\in \Dom\Q,
\]
which we abbreviate as
\begin{equation}
\label{eq:deterministic evolution}
  \dot \rho_t = \Q^T\rho_t.
\end{equation}

In many cases, the rate of convergence of $\rho^{(n)}$ to the deterministic evolution~\eqref{eq:deterministic evolution} is characterized by a large-deviation principle of the form (see \cite[Def.~1.1]{Feng2006} for the rigorous definition)
\begin{align}
\label{eq:path ldp}
  &\Prob\!\left(\big(\rho^{(n)}_t\big)_{t=0}^T \approx \rho\right) \mathop{\sim}_{n\to\infty} e^{-n \left(I_0(\rho_0)+I_T(\rho)\right)},
  &I_T(\rho):=\int_0^T\!\cL(\rho_t,\dot\rho_t)\,dt,
\end{align}
where $I_0$ characterizes the stochastic fluctuations in the initial datum $\rho^{(n)}_0$, and $I_T$ characterizes the fluctuations in the dynamics of $t\mapsto\rho^{(n)}_t$. The rate functional $I_0+I_T$ can be interpreted as the probabilistic cost, or the `unlikeliness,' to deviate from the deterministic evolution. This interpretation implies formally that $\cL$ is a non-negative function that attains $0$ if and only if \eqref{eq:deterministic evolution} holds. Therefore, $\cL$ induces an evolution $\dot\rho_t=\cA(\rho_t)$ as in \eqref{eq:L-function minimization}, if we define $\cA$ as the adjoint of $\Q$, acting on probability measures. Indeed, such $\cL$ can be expected to be an L-function, as we discuss in more detail in Section~\ref{subsec:large deviations and L-functions}.

L-functions that arise from large deviations of empirical processes
have a very specific structure; this structure can be used to refine
the theory of Section~\ref{sec:L-functions to generalized flows}. In
particular, in our second main result, Theorem~\ref{th:time-rev is detailed balance}, 
we will see that for such L-functions, the
integrability condition~\eqref{eq:integrability condition} and time
symmetry~\eqref{eq:time-symmetric L intro} are directly related to the
detailed balance condition of the underlying stochastic process.

Throughout this work we avoid the term \emph{reversible Markov
  processes}, which has its origin in the microscopic time
reversibility of the paths. In the theory of Markov processes, it is
well-known (and we use this in the proof of Theorem~\ref{th:time-rev is detailed balance})
that this property is equivalent to the
detailed balance condition. The point is that the usage of the term
\emph{reversibility} is quite differently in thermodynamics: the
acronym GENERIC for General Equations for Non-Equilibrium Reversible
Irreverible Coupling is used for combining Hamilitonian systems
(called reversible systems) and gradient systems (called irreversible
systems), see e.g.\ \cite{Mielke2011,Duong2013}. Since ``reversible
Markov processes'' can be described in terms of dissipative gradient
systems, we rather use the term ``time symmetry'', because the
microscopic time symmetry is reflected in the symmetry of the
dissipation potentials.

\subsection{Applications}
\label{subsec:applications}

After we develop the general theory in Section~\ref{sec:L-functions to generalized flows} and \ref{sec:ldp L-functions}, we apply it to two central examples that arise in the large deviations of an empirical process, as explained above.

Our first example, treated in Section~\ref{subsec:Markov stochsys}, concerns a Markov process on a finite state space $\Omega=\{1,\dots,J\}$, with generator matrix $Q$. We consider the empirical process in $\P(\{1,\hdots,J\})$ which converges to the deterministic linear system of first-order equations,
\begin{equation}
\label{eq:Markov evolution}
  \dot \rho_t = Q^T \rho_t, \qquad \text{in } \P(\{1,\hdots,J\})\times[0,T].
\end{equation} 
We prove in the appendix that the corresponding large deviations L-function is given by
\begin{align*}
  \cL(\rho,s)=\sup_{\xi\in\R^J} \xi\cdot s - \cH(\rho,\xi), &&\text{where}&&   \cH(\rho,\xi)= \sum_{i,j=1}^J \rho_i Q_{ij}\left(e^{\xi_j-\xi_i}-1\right)\!.
\end{align*}
From this we construct a generalized gradient system inducing \eqref{eq:Markov evolution} by the methods developed in this paper. 

Our current research was largely motivated by such Markov evolutions, since both \cite{Maas2011, Chow2012} and one of us
\cite{Mielke2012a} serendipitously discovered a (quadratic) entropy-driven
gradient system for these equations, where the metric can be seen as a
finite-space counterpart of the Wasserstein metric. In fact, the
latter gradient structure appears as a special case of the general
gradient structure of nonlinear chemical reactions studied in
\cite{Mielke2011a,MaaMie13?GSRR}. Interestingly, it turns out that 
the generalized gradient system that we derive in this paper is fundamentally
different than the quadratic structure proposed in \cite{Maas2011, Chow2012,Mielke2012a}.

In the second application we study the empirical process of independent Brownian particles in $\Omega=\R^d$ with a drift given by a general force field $F$. The empirical process then converges almost surely to a solution of the drift-diffusion equation,
\begin{equation}
\label{eq:drift-diffusion}
  \dot \rho_t = \Delta \rho_t+\div(\rho_t F), \qquad \text{in } \P(\R^d)\times[0,T].
\end{equation}
The corresponding large deviation L-function is then given by
\begin{equation*}
  \cL(\rho,s)=\frac14\|s-\lapl\rho-\div(\rho F)\|^2_{H^{-1}(\rho)}.
\end{equation*}
Whenever $F$ is itself a gradient, we again apply the theory from this paper to derive a connection with a gradient system for \eqref{eq:drift-diffusion}, and the resulting gradient system is the well-known Wasserstein gradient flow~\cite{Jordan1998,Ambrosio2008}. This connection between large deviations and the Wasserstein gradient system is certainly not new; it is investigated in~\cite{Adams2011,Adams2012} but was probably understood in the probability community well before. We include it here to show how the theory of this paper is a generalization of this result.

\subsection{Relation with earlier work}
%\label{subsec:previous work}

Several examples of this connection have been discussed before, such as in~\cite{Adams2011,Adams2012,Renger2013,Peletier2013,Duong2013a}. In some of this earlier work in this topic, we studied particle systems of the form \eqref{eq:empirical process}, \eqref{eq:deterministic evolution} and \eqref{eq:path ldp}, but for a \emph{fixed time step} $t>0$, which yields a `conditional' large-deviation principle (see \cite[Th.~17]{Peletier2013}, \cite[Prop.~3.2]{Leonard2007}), formally denoted as:
\begin{equation}
  \Prob\big(\rho^{(n)}_t \approx \rho_t \, \big| \,\rho^{(n)}_0 \approx \rho_0\big) \mathop{\sim}_{n\to\infty} \exp\big(\!-n \hat I_t(\rho_t|\rho_0)\big), \label{eq:conditional ldp}
\end{equation}
For the Fokker-Planck equation, we could then prove that, in the sense of a special $\Gamma$-convergence \cite{Adams2011,Duong2013a}:
\begin{equation}
\label{eq:Fokker-Planck small-time development}
  \frac12 \hat I_t(\rho_t|\rho_0) \sim \S(\rho_t)-\S(\rho_0) + \frac1{2t}d^2(\rho_0,\rho_t) \qquad\text{as }t\to0,
\end{equation}
where $\S$ is the free energy with respect to the Lebesgue measure and $d$ is the Wasserstein metric. In the right-hand side of \eqref{eq:Fokker-Planck small-time development} one recognizes the discrete-time approximation of the Wasserstein gradient flow of the free energy, as found in the original paper \cite{Jordan1998}.

Let us also mention here the relation between our time-reversibility condition~\eqref{eq:time-symmetric L intro} and the concept of entropy production as described in \cite{Maes1999,Maes2000}. For the empirical measure~\eqref{eq:empirical process} of a fixed number of particles $n$, they define a notion of entropy $\Ent^{(n)}$ through the relation
\begin{equation}
\label{Maes}
  \Ent^{(n)}(\rho_T)-\Ent^{(n)}(\rho_0) = \log\frac{d\bP^{(n)}_{\lbrack0,T\rbrack}}{d\bP^{(n)}_{\lbrack T,0\rbrack}}(\rho),
\end{equation}
where $\bP^{(n)}_{\lbrack0,T\rbrack}$ is the probability on paths of the empirical measure, and $\bP^{(n)}_{\lbrack T,0\rbrack}$ is the probability on time-reversed paths. This expression presumes a property of $\bP^{(n)}_{\lbrack0,T\rbrack}$, which is that the right-hand side can be written as a function of the initial and final points of $\rho$ only, a property akin to a vector field being conservative.

If we scale this identity with $1/n$, and call $2\S$ the limit of this entropy, i.e.\ formally $2\S:=\lim_{n\to\infty}\frac1n \Ent^{(n)}$, then the large~deviation principle \eqref{eq:path ldp} gives, again formally,
\begin{multline*}
    2\S(\rho_T)-2\S(\rho_0)=\lim_{n\to\infty} \frac1n\Ent^{(n)}(\rho_T)-\lim_{n\to\infty}\frac1n \Ent^{(n)}(\rho_0) \\
  = \lim_{n\to\infty}\frac1n\log\bP^{(n)}_{\lbrack0,T\rbrack}(\rho)-\lim_{n\to\infty}\frac1n\log\bP^{(n)}_{\lbrack T,0\rbrack}(\rho) = \int_0^T\!\cL(\rho_t,\dot\rho_t)\,dt-\int_0^T\!\cL(\rho_t,-\dot\rho_t)\,dt.
\end{multline*}
This is exactly the time-integrated version of our time-symmetry condition~\eqref{eq:time-symmetric L intro}, which we show to be equivalent to detailed balance (Section~\ref{sec:ldp L-functions}). This confirms the relationship between the assumption underlying~\eqref{Maes} and detailed balance that was also discussed in~\cite{Maes2000}.

\medskip
Finally, we like to note that in practical applications, often one does not have a truly Riemannian manifold. In the case of the Wasserstein space, the linear structure of the space of Borel measures can be used to define derivatives, the tangent space is still well-defined~\cite{Ambrosio2008}, and at least formally, a generalized gradient system in the sense of Section~\ref{subsec:generalized gradient flows} can be described. On general metric spaces, however, this may fail. It is then possible to define a purely metric formulation of generalized gradient systems, for which we refer to \cite{Rossi2008}.

\section{From L-functions to generalized gradient flows}
\label{sec:L-functions to generalized flows}

In this section we will see exactly under which conditions a generalized gradient system can be constructed out of a given L-function. To stress the generality of our work, we now switch to a more general setting, not necessarily related to large deviations. Throughout this section, $\Z$ denotes a space with well-defined (linear) tangent vector bundle $T\Z$ and cotangent vector bundle $T^*\Z$ that are in dual pairing with each other via a bilinear form $\langle\cdot,\cdot\rangle$. All Fr{\'e}chet derivatives are defined through this dual pairing: if $\S:\Z\to\R$ and $\Psi:T\Z\to\R$ then $D\S(z),D_s\Psi(z,s)\in T_z^*\Z$, and if $\Psi^*:T^*\Z\to\R$ then $D_\xi\Psi^*(z,\xi)\in T_z\Z$. {To focus on the main ideas, we assume that all functionals are everywhere finite and differentiable (see Remark~\ref{rem:differentiability}).}

\subsection{The relationship between $\cL$ and the generalized gradient system}
%\label{subsec:construction}

Assume we are given an L-function $\cL:T\Z\to\R$. In what
follows, it will be convenient to consider its convex dual $\cH$,
defined through the relations
\begin{align}
\label{eq:LH dual pair}
  \cH(z,\xi)=\sup_{s\in T_z\Z} \langle \xi,s\rangle - \cL(z,s), &&\text{and} &&\cL(z,s)=\sup_{\xi\in T_z^*\Z} \langle \xi,s\rangle - \cH(z,\xi).
\end{align}
%The following lemma describes the construction of $\Psi$ and $\Psi^*$ when the entropy functional~$\S$ is already known. 

The following lemma is central in all that follows, despite its short and simple proof. In this construction, we will allow any covector field $\cV(z)$, not necessarily of the form $D\S(z)$. 
\begin{lemma} Let $\cL$ and $\cH$ be related through \eqref{eq:LH dual pair}, and $z\mapsto \cV(z)\in T_z^*\Z$ be any covector field. Moreover, let $\Psi$ and $\Psi^*$ be related through \eqref{eq:Psi-Psi* dual pair} with $\Psi^*(\cdot,0)=0$. Then $\cL$ can be written as
\begin{equation}
\label{eq:L=Psi Psi*}
  \cL(z,s) = \Psi(z,s) + \Psi^*\big(z,-\cV(z)\big) + \langle \cV(z),s\rangle \qquad \text{for all } (z,s)\in T\Z,
\end{equation}
if and only if $\Psi^*=\Psi^*_{\cL,\cV}$ where
\begin{equation}
  \Psi_{\cL,\cV}^*(z,\xi):=\cH\big(z,\cV(z)+\xi\big)-\cH\big(z,\cV(z)\big).
\label{eq:Psi* from H}
\end{equation}
\label{lem:L=Psi Psi*}
\end{lemma}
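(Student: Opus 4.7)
The plan is to push everything through Legendre duality in the second variable, using the elementary shift property: if $f(s) = g(s) - \langle c, s\rangle + \text{const}$, then $f^*(\xi) = g^*(\xi + c) - \text{const}$. The condition $\Psi^*(z,0) = 0$ is precisely what pins down the constant.

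For the ``only if'' direction, I would start from the assumed identity \eqref{eq:L=Psi Psi*} and rearrange to
\[
  \Psi(z,s) = \cL(z,s) - \langle \cV(z), s\rangle - \Psi^*\big(z,-\cV(z)\big),
\]
viewing the last term as a constant (in $s$). Taking the Legendre transform in $s$ and using \eqref{eq:LH dual pair} gives
\[
  \Psi^*(z,\xi) = \sup_{s} \langle \xi + \cV(z), s\rangle - \cL(z,s) + \Psi^*\big(z,-\cV(z)\big) = \cH\big(z,\xi+\cV(z)\big) + \Psi^*\big(z,-\cV(z)\big).
\]
Evaluating at $\xi = 0$ and invoking the normalization $\Psi^*(z,0) = 0$ then forces $\Psi^*(z,-\cV(z)) = -\cH(z,\cV(z))$, so substituting back yields precisely the formula \eqref{eq:Psi* from H} for $\Psi^*_{\cL,\cV}$.

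For the ``if'' direction, I would set $\Psi^* := \Psi^*_{\cL,\cV}$, observe that the normalization $\Psi^*(z,0)=0$ is obvious from \eqref{eq:Psi* from H}, and then compute the Legendre dual $\Psi$ in $\xi$. A change of variable $\eta = \cV(z) + \xi$ inside the supremum, together with the Fenchel-biduality $\cH(z,\cdot)^* = \cL(z,\cdot)$ (valid under the convexity of $\cL(z,\cdot)$ used throughout the paper), gives
\[
  \Psi(z,s) = \cL(z,s) - \langle \cV(z),s\rangle + \cH(z,\cV(z)).
\]
Plugging this, together with $\Psi^*(z,-\cV(z)) = \cH(z,0) - \cH(z,\cV(z))$, into the right-hand side of \eqref{eq:L=Psi Psi*} yields $\cL(z,s) + \cH(z,0)$, which equals $\cL(z,s)$ because $\cH(z,0) = -\inf_s \cL(z,s) = 0$ in the L-function setting of this section.

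The ``main obstacle'' is essentially only bookkeeping: one must be careful that the normalization condition $\Psi^*(z,0)=0$ is used in the forward direction to eliminate the additive ambiguity inherent in Legendre duality, and that $\cH(z,0)=0$ (a consequence of $\cL\ge 0$ and $\min_s\cL(z,s)=0$) is what makes the converse close up cleanly. Both points deserve an explicit line in the write-up but require no hard analysis.
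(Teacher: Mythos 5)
Your proof is correct and follows essentially the same route as the paper's: both directions are the same Legendre-transform computations, with the normalization $\Psi^*(z,0)=0$ pinning down the additive constant. Your explicit remark that $\cH(z,0)=-\inf_s\cL(z,s)=0$ is needed to close the ``if'' direction is a point the paper leaves implicit (it is hidden in the step $\cH(z,\cV(z))=-\Psi^*_{\cL,\cV}(z,-\cV(z))$), but it is the same argument, not a different one.
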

\vspace{-0.8cm}
\begin{proof}
First assume that $\Psi^*=\Psi^*_{\cL,\cV}$. We calculate
\begin{align}
  \Psi_{\cL,\cV}(z,s) &=\sup_{\xi}\, \langle \xi,s\rangle - \Psi_{\cL,\cV}^*(z,\xi) \notag\\
                &=\sup_{\xi}\, \langle \xi,s\rangle - \cH\big(z,\cV(z)+\xi\big) + \cH\big(z,\cV(z)\big) \notag\\
                &=\sup_{\xi}\, \langle \xi-\cV(z),s\rangle - \cH\big(z,\xi\big) + \cH\big(z,\cV(z)\big)\notag\\
                &=\cL(z,s) - \langle \cV(z),s\rangle + \cH\big(z,\cV(z)\big) \label{eq:calculation Psi**}.
\end{align}
Since $\cH\big(z,\cV(z)\big) =-\Psi_{\cL,\cV}^*\big(z,-\cV(z)\big)$ by~\eqref{eq:Psi* from H}, we obtain \eqref{eq:L=Psi Psi*}.

For the other direction, assume that \eqref{eq:L=Psi Psi*} holds. The dual of~\eqref{eq:L=Psi Psi*} is then given by
\begin{align*}
  \cH(z,\xi) &= \sup_s\,\langle \xi,s\rangle-\cL(z,s)\\
    &=\sup_s\,\big\langle \xi-\cV(z),s\big\rangle - \Psi(z,s) - \Psi^*\big(z,-\cV(z)\big)\\
    &=\Psi^*\big(z,\xi-\cV(z)\big)- \Psi^*\big(z,-\cV(z)\big).
\end{align*}
The last term can be found by substituting $\xi=\cV(z)$:
\begin{equation*}
  \cH\big(z,\cV(z)\big)=-\Psi^*\big(z,-\cV(z)\big),
\end{equation*}
since $\Psi^*(z,0)=0$ by assumption. We conclude that $\Psi^*$ is of the form $\eqref{eq:Psi* from H}$.
\end{proof}

The surprising aspect of this lemma is that \emph{any} L-function $\cL$ can be written in the form \eqref{eq:L=Psi Psi*}, for \emph{any} covector field $\cV$. Therefore, this property contains no information in itself. Convenient choices of the covector field $\cV$ might be determined by the question whether (a) $\cV$ is a derivative of some functional $\S$, and (b) whether the corresponding $\Psi$ and $\Psi^*$ are both non-negative (see \eqref{eq:min Psi iff Psi*} and \eqref{eq:Psi iff min Psi*})---since the combination of both leads to monotonicity of $\S$. 
We first address property (b) in Proposition~\ref{prop:time-symmetric L to gradient flow} below, and we simultaneously address the question when $\Psi$ is symmetric.

\begin{proposition} Let an L-function $\cL:T\Z\to\R$ and a covector field $\cV$ be given, and let $\Psi_{\cL,\cV}^*$ and its dual $\Psi_{\cL,\cV}$ be constructed according to Lemma~\ref{lem:L=Psi Psi*}. Then 
\begin{enumerate}[(i)]
\item \label{it:locally reversible is pseudo-gradient system} $\Psi_{\cL,\cV},\Psi_{\cL,\cV}^*\geq0$  if and only if $\cV(z)=\cV_\cL(z):=D_s\cL(z,0)$ for all $z\in\Z$. In that case, $\Psi_{\cL,\cV}$ can be written as
\begin{equation}
  \Psi_\cL(z,s):=\cL(z,s)-\cL(z,0)-\langle \cV_\cL(z),s\rangle.
\label{eq:Psi from L}
\end{equation}

\item \label{it:time symmetry is symmetric pseudo-gradient system} Assume that (\ref{it:locally reversible is pseudo-gradient system}) is satisfied. Then the following statements are equivalent:
  \begin{itemize}
  \item $\Psi_\cL$ is symmetric (Definition~\ref{def:gradient system}), %\label{it:symmetric potential}
  \item $\Psi^*_\cL$ is symmetric, i.e. $\Psi^*(z,\xi)=\Psi^*(z,-\xi) \quad\forall(z,\xi)\in T^*\Z$,
  \item $\cL(z,s)-\cL(z,-s)=2\langle \cV_\cL(z),s\rangle \quad\forall(z,s)\in T\Z$, \quad\text{`time symmetry'}    \eqnum\label{eq:time-symmetric L}
  \item $\cH\big(z,\cV_\cL(z)-\xi\big)=\cH\big(z,\cV_\cL(z)+\xi\big) \quad\forall(z,\xi)\in T^*\Z$.                 \eqnum\label{eq:time-symmetric H}
  \end{itemize}
\end{enumerate}
\label{prop:time-symmetric L to gradient flow}
\end{proposition}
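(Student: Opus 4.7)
My plan for part~(\ref{it:locally reversible is pseudo-gradient system}) is to exploit the explicit form of $\Psi_{\cL,\cV}$ derived inside the proof of Lemma~\ref{lem:L=Psi Psi*}, namely
\[
  \Psi_{\cL,\cV}(z,s) \;=\; \cL(z,s) - \langle \cV(z),s\rangle + \cH\bigl(z,\cV(z)\bigr).
\]
The crucial observation is that Fenchel--Young applied to the conjugate pair $(\cL,\cH)$ makes this expression non-negative \emph{unconditionally}, so the ``if and only if'' in~(\ref{it:locally reversible is pseudo-gradient system}) really concerns only when $\Psi^*_{\cL,\cV}\geq 0$. Since $\Psi^*_{\cL,\cV}(z,0)=0$ by construction and $\Psi^*_{\cL,\cV}(z,\cdot)$ is convex, this reduces to $\xi=0$ being a minimizer of $\xi\mapsto \cH(z,\cV(z)+\xi)$, that is, $D_\xi\cH(z,\cV(z))=0$. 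The Legendre--Fenchel identity $s = D_\xi\cH(z,\xi) \iff \xi = D_s\cL(z,s)$ then turns this first-order condition into $\cV(z) = D_s\cL(z,0) = \cV_\cL(z)$. For the explicit formula~\eqref{eq:Psi from L}, I would evaluate $\cH(z,\cV_\cL(z))$ directly from its supremum definition: its optimality condition is solved by $s=0$, giving $\cH(z,\cV_\cL(z)) = -\cL(z,0)$, and substitution yields~\eqref{eq:Psi from L}.

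For part~(\ref{it:time symmetry is symmetric pseudo-gradient system}) I would close a cycle of four implications. The equivalence ``$\Psi_\cL$ symmetric $\iff \Psi^*_\cL$ symmetric'' is the general fact that the Legendre transform intertwines $s\mapsto -s$ with $\xi\mapsto -\xi$: performing the change of variable $s\mapsto -s$ in $\Psi^*_\cL(z,-\xi)=\sup_s\langle -\xi,s\rangle - \Psi_\cL(z,s)$ converts it into $\Psi^*_\cL(z,\xi)$ precisely when $\Psi_\cL(z,\cdot)$ is symmetric, and conversely. The remaining two equivalences are algebraic consequences of the explicit formulas from~(\ref{it:locally reversible is pseudo-gradient system}): substituting $s\mapsto -s$ into~\eqref{eq:Psi from L} and comparing with $\Psi_\cL(z,s)$ produces exactly~\eqref{eq:time-symmetric L}, while $\Psi^*_\cL(z,-\xi)=\Psi^*_\cL(z,\xi)$ via~\eqref{eq:Psi* from H} is literally~\eqref{eq:time-symmetric H}.

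The only conceptual hurdle lies in the reduction in part~(\ref{it:locally reversible is pseudo-gradient system}): noticing that $\Psi_{\cL,\cV}\geq 0$ carries no information, so all the force of the statement is concentrated in $\Psi^*_{\cL,\cV}\geq 0$, which is a minimization statement about $\cH(z,\cdot)$. Once this pivot is made, the rest is standard Fenchel duality under the paper's smoothness assumption; a less regular setting would require subdifferential bookkeeping to replace the derivative identities by their monotone-operator analogues, but the logical skeleton remains the same.
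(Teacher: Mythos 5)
Your proof is correct and follows essentially the same route as the paper's: part (i) reduces, via Legendre duality, to the first-order/minimality condition for $\cH(z,\cdot)$ at $\cV(z)$ (the paper runs the converse through $D_s\Psi_{\cL,\cV}(z,0)=0$ and differentiates \eqref{eq:L=Psi Psi*}, you run it through $D_\xi\cH(z,\cV(z))=0$ and gradient inversion — the same computation seen from the dual side), and part (ii) is the same term-by-term comparison of the explicit formulas \eqref{eq:Psi from L} and \eqref{eq:Psi* from H}. Your one refinement — that $\Psi_{\cL,\cV}\ge 0$ holds for \emph{every} covector field by Fenchel--Young, so all the content of (i) sits in $\Psi^*_{\cL,\cV}\ge 0$ — is implicit in the paper (it follows from \eqref{eq:min Psi iff Psi*} together with $\Psi^*_{\cL,\cV}(z,0)=0$, which holds by construction), but making it explicit is a worthwhile clarification.
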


\begin{proof}To prove \eqref{it:locally reversible is pseudo-gradient system}, we first assume that $\cV(z)=\cV_\cL(z)=D_s\cL(z,0)$. By duality, $\cV_\cL(z)$ {is} a minimizer of $\cH(z,\cdot)$, so that \eqref{eq:Psi* from H} gives
\begin{equation*}
%\label{eq:inf H=L0}
  \Psi_{\cL,\cV}^*(z,\xi)=\cH\big(z,\cV_\cL(z)+\xi\big)-\cH\big(z,\cV_\cL(z)\big)\geq0,
\end{equation*}
and clearly $\Psi_{\cL,\cV}^*(z,0)=0$. By \eqref{eq:min Psi iff Psi*} this implies that $\Psi_{\cL,\cV},\Psi_{\cL,\cV}^*\geq0$.
Conversely, if $\Psi_{\cL,\cV},\Psi_{\cL,\cV}^*\geq0$, then $D_s\Psi_{\cL,\cV}(z,0)=0$. Differentiating \eqref{eq:L=Psi Psi*} then gives
\begin{equation*}
  D_s\cL(z,0)=D_s\Psi_{\cL,\cV}(z,0)+\cV(z)=\cV(z),
\end{equation*}
and hence $\cV=\cV_\cL$.

To prove \eqref{eq:Psi from L}, observe that $\Psi_\cL(z,0)=0$, and~\eqref{eq:calculation Psi**} gives
\begin{equation*}
  0=\Psi_\cL(z,0)=\cL(z,0) + \cH\big(z,\cV_\cL(z)\big).
\end{equation*}
Identity~\eqref{eq:Psi from L} follows from resubstituting this expression into~\eqref{eq:calculation Psi**}.

\eqref{it:time symmetry is symmetric pseudo-gradient system} The equivalence of the symmetry of $\Psi_\cL$ and $\Psi^*_\cL$ follows directly from the duality relation~\eqref{eq:Psi-Psi* dual pair}. By identity~\eqref{eq:Psi from L}, the symmetry property $\Psi_\cL(z,s) = \Psi_\cL(z,-s)$ is equivalent to~\eqref{eq:time-symmetric L}. By~\eqref{eq:Psi* from H}, condition~\eqref{eq:time-symmetric H} is equivalent to symmetry of $\Psi_\cL^*$.
\end{proof}

\subsection{The first main result}

By Proposition~\ref{prop:time-symmetric L to gradient flow}, the main conditions on $\cL$ to supply the induced evolution $\dot z_t=\cA(z_t)$ with a gradient structure are that the covector field $\cV(z)$ equals $D_s\cL(z,0)$, and that this covector field satisfies the \emph{integrability condition} $D_s\cL(z,0)=D\S(z)$ for some $\S$. This leads to the following abstract result:
\begin{theorem} For any given L-function $\cL$ there holds:
\begin{enumerate}[(i)]
\item If the covector field $D_s\cL(z,0)=D\S_\cL(z)$ for some $\S_\cL:\Z\to\R$, and $\Psi_\cL,\Psi^*_\cL$ are constructed according to Lemma~\ref{lem:L=Psi Psi*} with $\cV = D\S_\cL$, then $(\Z,\Psi_\cL,\S_\cL)$ is a generalized gradient system that induces the evolution $\dot z_t=\cA_\cL(z_t)$ and satisfies~\eqref{eq:L=Psi Psi*}.
\item Moreover, any generalized gradient system $(\Z,\Psi,\S)$ inducing the evolution $\dot z_t=\cA_\cL(z_t)$ and satisfying \eqref{eq:L=Psi Psi*} equals $(\Z,\Psi_\cL,\S_\cL)$, up to adding a constant to $\S$.
\item In addition, $\Psi_\cL$ is symmetric if and only if $\cL$ satisfies the time-symmetry condition~\eqref{eq:time-symmetric L}.
\end{enumerate}
\label{th:L function to generalized gradient system}
\end{theorem}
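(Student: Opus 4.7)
The proof is essentially a bookkeeping exercise that assembles Lemma~\ref{lem:L=Psi Psi*} with Proposition~\ref{prop:time-symmetric L to gradient flow}; most of the substance has already been extracted there, so the plan is to identify which piece of each feeds into which clause of the theorem.

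For part~(i), I would first check the three requirements of Definition~\ref{def:gradient system} for $(\Z,\Psi_\cL,\S_\cL)$. Convexity of $\Psi_\cL(z,\cdot)$ is automatic because it is defined as the Legendre dual of $\Psi_\cL^*(z,\cdot)$. Since the prescribed covector field $\cV=D\S_\cL$ agrees with $D_s\cL(z,0)$, Proposition~\ref{prop:time-symmetric L to gradient flow}(i) delivers $\Psi_\cL,\Psi_\cL^*\geq 0$, hence $\min\Psi_\cL(z,\cdot)=0$ via \eqref{eq:min Psi iff Psi*}, while $\Psi_\cL(z,0)=0$ is read off directly from the explicit formula~\eqref{eq:Psi from L} at $s=0$. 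To verify that the system induces $\dot z_t=\cA_\cL(z_t)$ and satisfies~\eqref{eq:L=Psi Psi*}, I invoke Lemma~\ref{lem:L=Psi Psi*} with $\cV=D\S_\cL$, which yields the pointwise identity
\begin{equation*}
  \cL(z,s) = \Psi_\cL(z,s) + \Psi_\cL^*\bigl(z,-D\S_\cL(z)\bigr) + \langle D\S_\cL(z), s\rangle.
\end{equation*}
The right-hand side is Fenchel-non-negative and vanishes precisely when Fenchel equality holds, which is exactly the gradient-flow equation~\eqref{eq:Psi Psi* formulation}. Since $\cL$ is an L-function, its zero set coincides with the graph of $\cA_\cL$, so the gradient flow generated by $(\Z,\Psi_\cL,\S_\cL)$ is identical to the evolution induced by $\cL$.

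For part~(ii), suppose $(\Z,\Psi,\S)$ is another generalized gradient system satisfying \eqref{eq:L=Psi Psi*}. Its Definition~\ref{def:gradient system}(iia)-(iib), combined with \eqref{eq:min Psi iff Psi*}-\eqref{eq:Psi iff min Psi*}, force both $\Psi\geq 0$ and $\Psi^*\geq 0$. Plugging the covector field $\cV=D\S$ into Proposition~\ref{prop:time-symmetric L to gradient flow}(i) then gives $D\S(z)=D_s\cL(z,0)=D\S_\cL(z)$, so $\S$ and $\S_\cL$ differ only by an additive constant. The uniqueness clause of Lemma~\ref{lem:L=Psi Psi*} then pins down $\Psi^*=\Psi_\cL^*$, and by Legendre duality $\Psi=\Psi_\cL$.

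Part~(iii) is immediate from Proposition~\ref{prop:time-symmetric L to gradient flow}(ii) applied to the present choice $\cV=\cV_\cL=D\S_\cL$, which equates symmetry of $\Psi_\cL$ with the time-symmetry condition~\eqref{eq:time-symmetric L}. I do not anticipate a genuine obstacle: the only conceptual point---that the non-negativity requirement in Definition~\ref{def:gradient system} forces the covector field $\cV$ to equal $D_s\cL(z,0)$---is already isolated in Proposition~\ref{prop:time-symmetric L to gradient flow}(i), so existence, uniqueness, and the symmetry characterization are all algebraic consequences of the two preceding results.
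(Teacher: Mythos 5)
Your proposal is correct and follows exactly the route the paper takes: the paper's own proof is the single sentence that the theorem is a direct consequence of Lemma~\ref{lem:L=Psi Psi*} and Proposition~\ref{prop:time-symmetric L to gradient flow} with $\cV_\cL = D\S_\cL$, and your write-up simply makes that assembly explicit (verifying the three conditions of Definition~\ref{def:gradient system}, using the lemma's uniqueness clause for part~(ii), and Proposition~\ref{prop:time-symmetric L to gradient flow}(ii) for part~(iii)). No gaps.
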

\begin{proof} The theorem is a direct consequence of Lemma~\ref{lem:L=Psi Psi*} and Proposition~\ref{prop:time-symmetric L to gradient flow}, using the covector field $\cV_\cL=D\S_\cL$.
\end{proof}

We return to the interpretation of this theorem in the discussion in Section~\ref{sec:discussion}.

%\paragraph{Interpretation of Theorem~\ref{th:L function to generalized gradient system}.}
%\begin{enumerate}
%\item 

\begin{remark}
  As discussed in the introduction of this section, we have assumed that $\cL(z,\cdot)$ is differentiable in $0$. Many of these results generalize to the case where $\cL(z,\cdot)$ is convex, lower semicontinuous, and may attain the value $+\infty$. Then the subdifferential $\partial_s\cL(z,0):=\{\xi\in T^*_z\Z:\forall s\in T_z\Z \quad \cL(z,s)\geq\cL(z,0)+\langle\xi,s\rangle\}$ is well-defined, and for instance the condition $D_s\cL(z,0) = D\S(z)$ can be generalized to finding an $\S_\cL$ such that $D\S_\cL(z)\in\partial_s\cL(z,0)$. In some cases $\partial_s\cL(z,0)$ may be empty. This happens for example if $\cL$ corresponds to the large-deviation rate functional of the empirical process on a finite state space $\{1,\hdots,J\}$, where the underlying Markov processes are reducible and the tangent space includes tangents of the form $\mathds1_{\omega_1}-\mathds1_{\omega_2}$ for $\omega_1$ and $\omega_2$ from different irreducible components of $\{1,\hdots,J\}$.
\label{rem:differentiability}
\end{remark}

\section{Large-deviation L-functions of empirical processes}
\label{sec:ldp L-functions}

In this section we decrease the level of abstraction by one notch, and focus on a specific class of L-functions, namely those that arise from a large-deviation principle of an empirical process. Throughout this section, we consider a general Polish (i.e.\ complete separable metric) space $\Omega$, and a general linear generator $\Q:\Dom\Q\to C_b(\Omega), \Dom\Q\subset C_b(\Omega)$. As usual, we assume that $\Q1\equiv0$, which is equivalent to requiring conservation of probability. As described in Section~\ref{subsec:applications}, we study the large deviations of the empirical process $\rho^{(n)}$, defined in~\eqref{eq:empirical process}, corresponding to a sequence of independent Markov processes with generator $\Q$.

We define $\M(\Omega)$ as the space of finite, signed Borel measures on $\Omega$. Since $\rho^{(n)}$ is a (random) probability measure, the state space $\Z$ is a subspace of $\P(\Omega):=\{\rho\in\M(\Omega):\rho\ge0,\,\rho(\Omega)=1\}$. Note that we denote elements of $\Z$ as $\rho$, to emphasize that we are now working with measures. The tangent bundle $T_\rho\Z$ will formally be a subspace of $\M_0(\Omega):=\{s\in\M(\Omega):\rho(\Omega)=0\}$. For general purposes it suffices to take the cotangent bundle $T_\rho^*\Z$ as a subset of $C_b(\Omega)$, with the usual dual pairing
\begin{equation*}
  \langle \xi,s\rangle := \int_\Omega\!\xi(x)\,s(dx).
\end{equation*}
The exact spaces $\Z\subset\P(\Omega), T\Z\subset\Z\times\M_0(\Omega)$ and $T^*\Z\subset\Z\times C_b(\Omega)$ depend on the situation, and must be chosen case-by-case.

\subsection{Large-deviation principles and L-functions}
\label{subsec:large deviations and L-functions}

Today, there are two common methods to derive large-deviation principles of the form \eqref{eq:path ldp}. Both methods yield the convex dual $\cH:T^*\Z\to\R$ of the L-function $\cL$. As we have seen, this duality plays a crucial role in the connection with generalized gradient flows.

The first method, sometimes called Cram{\'e}r's trick, is based on the idea that, in order to calculate the large deviation rate for any path $\rho$, one can perturb the generator in such a way that the curve $\rho$ becomes the most likely one (see for example~\cite[Ch.~10]{Kipnis1999}). As an illustration and for the sake of completeness, we use this method to prove a large-deviation principle for the empirical process of finite-state Markov chains in Section~\ref{subsec:Markov stochsys}. Since the arguments are more-or-less standard, we postpone this proof to the appendix.

The other method, due to Feng and Kurtz \cite{Feng2006}, provides a formal way to calculate the large-deviation rate directly, although it requires more advanced techniques to make the arguments rigorous. It can be used to calculate a broad class of large deviations of a family of Markov processes. For empirical processes of independent particles, the formalism gives us the explicit formula (see \cite[Example~1.14 and Section~13.3]{Feng2006}):
\begin{equation}
\label{eq:H from Feng-Kurtz}
  \cH(\rho,\xi):=\int_\Omega\! e^{-\xi} \big(\Q e^\xi\big)\, d\rho.
\end{equation}
This function also plays an important role in Cram\'er's method; for example, it appears in our large-deviation proof for finite-state Markov chains in equation~\eqref{eq:app nonlinear generator}. %By duality~\eqref{eq:LH dual pair} one finds the transform $\cL$, and obtains a large-deviation principle of the form~\eqref{eq:path ldp}. 

\medskip
By either method one arrives at the large-deviation principle~\eqref{eq:path ldp}, with 
some function $\cL:T\Z\to\R$, related to $\cH$ through
\begin{equation}
  \cH(\rho,\xi)=\sup_{s}\,\langle \xi,s\rangle-\cL(\rho,s).
\label{eq:emp measure H dual of L}
\end{equation}
We point out that for general large deviations, $\cL$ may not be convex (this can occur for example when the limit process is not deterministic). In that case, $\cL$ will not be an L-function in the sense of Definition~\ref{def:L-function} and we can not connect to the theory from Section~\ref{sec:L-functions to generalized flows}; in particular, the other duality relation \eqref{eq:LH dual pair} may not hold. We show here that these problems do not occur for empirical processes.
\begin{proposition} Assume the empirical process \eqref{eq:empirical process} satisfies the large-deviation principle~\eqref{eq:path ldp} in the Skorohod space $\D\big([0,T];\P(\Omega)\big)$, where $\cH$ is given by \eqref{eq:H from Feng-Kurtz}, and $\cL$ is related to $\cH$ through \eqref{eq:emp measure H dual of L}. Then $\cL$ is an L-function, i.e.
\begin{enumerate}[(i)]
\item $\cL\geq0$,
\item $\cL(\rho,\cdot)$ is convex for all $\rho$, hence $\cL(\rho,s)=\sup_\xi\,\langle\xi,s\rangle-\cH(\rho,\xi)$, and 
\item $\cL(\rho,s)=0 \iff s=\Q^T\rho$,
\end{enumerate}
 where $\Q^T$ is the adjoint of the one-particle generator $\Q$, acting on probability measures.
\end{proposition}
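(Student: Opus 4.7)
The strategy will be to extract everything possible from the explicit Feng--Kurtz formula for $\cH$, in particular its convexity in $\xi$, and combine it with the assumed duality $\cH = \cL^*$ from~\eqref{eq:emp measure H dual of L}. Property~(i) is immediate since $\cL$ is (the density of) a large-deviation rate function; as a consistency check, $\Q 1 \equiv 0$ gives $\cH(\rho,0)=0$, which via~\eqref{eq:emp measure H dual of L} yields $\inf_s\cL(\rho,s)=0$.

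For (ii), I would first verify that $\cH(\rho,\cdot)$ is convex directly from the explicit formula. For instance, for a jump generator $\Q\phi(x)=\int(\phi(y)-\phi(x))\,q(x,dy)$ one obtains
\[
\cH(\rho,\xi) = \int\!\!\int \bigl(e^{\xi(y)-\xi(x)}-1\bigr)\,q(x,dy)\,\rho(dx),
\]
a superposition of convex exponentials in $\xi$; an analogous check works for diffusive generators via the carr\'e-du-champ. Given convexity of $\cH$, Fenchel--Moreau yields that $\cL^{**}(\rho,s)=\sup_\xi\langle\xi,s\rangle-\cH(\rho,\xi)$ is the convex lsc envelope of $\cL$. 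To promote this to $\cL=\cL^{**}$ (i.e.\ to actual convexity of $\cL$ and the claimed biduality), I would invoke the Feng--Kurtz identification of the rate-function density as the Legendre transform of the nonlinear generator; in the simpler finite-state setting this can be verified directly from the explicit rate function proved in the appendix.

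For (iii), the proof is a short differentiation followed by a concavity argument. Differentiating $\cH(\rho,\xi)=\int e^{-\xi}\,\Q e^\xi\,d\rho$ in $\xi$ at $0$ and using $\Q 1 = 0$ gives $D_\xi\cH(\rho,0)[\eta]=\int\Q\eta\,d\rho=\langle\eta,\Q^T\rho\rangle$, so $D_\xi\cH(\rho,0)=\Q^T\rho$. Convexity of $\cH$ and $\cH(\rho,0)=0$ then give $\cH(\rho,\xi)\ge\langle\xi,\Q^T\rho\rangle$, hence $\cL(\rho,\Q^T\rho)\le 0$; combined with~(i), equality. Conversely, if $\cL(\rho,s)=0$, then $f(t):=\langle t\xi,s\rangle-\cH(\rho,t\xi)$ is concave with $f(0)=0$ and $f\le 0$, so $f'(0)=\langle\xi,s-\Q^T\rho\rangle\le 0$ for every $\xi$; applying this to $\pm\xi$ forces $s=\Q^T\rho$.

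The main obstacle is the convexity half of~(ii): the hypothesis~\eqref{eq:emp measure H dual of L} alone only encodes $\cH=\cL^*$, which does not force $\cL$ to be convex (it could a priori differ from its lsc convex hull on non-relaxed configurations). The convexity and biduality must therefore come from outside~\eqref{eq:emp measure H dual of L}, either by invoking Feng--Kurtz's general construction which builds $\cL = \cH^*$ into the derivation of the LDP, or, case-by-case, from computing $\cL$ explicitly as in Section~\ref{subsec:Markov stochsys}.
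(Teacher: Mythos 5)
Your parts (i) and (iii) are sound and close to the paper's: (i) is read off from the definition of the rate function, and for (iii) you differentiate $\cH(\rho,\cdot)$ at $\xi=0$ to identify $\Q^T\rho$ as the critical direction and use convexity of $\cH$ together with $\cH(\rho,0)=0$. Your converse in (iii) -- the one-dimensional concavity argument along $t\mapsto\langle t\xi,s\rangle-\cH(\rho,t\xi)$, tested against $\pm\xi$ -- is a clean alternative to the paper's appeal to strict convexity of $\cL$ (which the paper deduces from differentiability of $\cH$); both are fine.

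The genuine gap is exactly where you flag it: the convexity of $\cL(\rho,\cdot)$ in part (ii). You conclude that this ``must come from outside'' the stated hypotheses and propose either to cite the Feng--Kurtz construction or to verify it case by case; but the proposition is meant to be a general statement for empirical processes, and the paper closes this gap with an argument internal to that setting which you are missing. The idea (after \cite[Lem.~4.6]{Dawson1987}) is to lift to the path space: the empirical measure $\theta^{(n)}=\frac1n\sum_k\delta_{(X_k(t))_{t=0}^T}$ on $\P\big(\D([0,T];\Omega)\big)$ satisfies, by Sanov's theorem, an LDP whose rate is a relative entropy on path space and hence \emph{convex}. The map $F$ sending a path-space measure $\theta$ to the curve of its time marginals is \emph{linear} and recovers $\rho^{(n)}=F(\theta^{(n)})$, so the contraction principle yields that the rate functional $I_0+I_T$ of the empirical process is convex as an infimum of a convex function over the affine fibers of $F$. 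Finally, one localizes in time: given $\rho$ and two tangents $s_1,s_2$, pick curves through $\rho$ with these initial velocities, apply convexity of $I_0+I_T$, and let $T\to0$ to obtain convexity of $\cL(\rho,\cdot)$; biduality $\cL=\cH^*$ then follows by Fenchel--Moreau. Without some such argument your proof of (ii) is not complete, and (since your converse in (iii) uses the representation $\cL=\cH^*$) the uniqueness claim in (iii) is also left hanging on it.
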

\begin{proof}
The non-negativity of $\cL$ follows immediately from the definition of the large-deviation principle.

%To prove the convexity, we first construct the empirical measure on curves
%\begin{equation*}
%  \theta^{(n)}:=\frac1n\sum_{k=1}^n \delta_{\left(X_k(t)\right)_{t=0}^T},
%\end{equation*}
%which lies in $\P\big(\D([0,T];\Omega)\big)$, where $\D([0,T];\Omega)$ is the Skorohod space of curves $x:[0,T]\to\Omega$. Sanov's Theorem~\cite[Th.~6.2.10]{Dembo1998} gives an abstract large-deviation principle for $\theta^{(n)}$ with a convex rate functional $I'$. 
%Since the rate function $I$ in~\eqref{eq:path ldp} is the contraction of $I'$ under a linear transformation and $I$ is convex, a localization argument yields the convexity of $\cL$ in $\dot z$.

The proof of the convexity of $\cL$ is inspired by \cite[Lem.~4.6]{Dawson1987}: first define the empirical measure on the curve space
\begin{equation*}
%\label{eq:empirical measure on curves}
  \theta^{(n)}:=\frac1n\sum_{k=1}^n \delta_{\left(X_k(t)\right)_{t=0}^T},
\end{equation*}
which lies in $\P(\D([0,T];\Omega))$, where $\D([0,T];\Omega)$ is the Skorohod space of curves $x:[0,T]\to\Omega$. Then by Sanov's Theorem~\cite[Th.~6.2.10]{Dembo1998}, the random variable $\theta^{(n)}$ satisfies a large-deviation principle, where the rate functional is the relative entropy on the curve space. We only exploit the convexity of this rate functional. Define the transformation $F:\P\big(\D([0,T];\Omega)\big)\to \D\big([0,T];\P(\Omega)\big)$, with
\begin{equation*}
  F(\theta)(dx):=\big(\theta(\{y\in\D([0,T];\Omega):y_t\in dx\})\big)_{t=0}^T.
\end{equation*}
We can then retrieve the empirical process through $F(\theta^{(n)})=\rho^{(n)}$, and the corresponding large-deviation rate is retrieved by a contraction principle \cite[Th.~4.2.1]{Dembo1998}. By linearity of the transformation $F$ we find that the rate functional $I_0 + I_T$ corresponding to $\rho^{(n)}$  also is convex \cite[Th.~III.32]{Hollander2000}. For any $\rho\in \P(\Omega)$, and any two $s_1,s_2\in T_\rho\P(\Omega)$, take two arbitrary curves $(\rho_t)_{t=0}^T,(\mu_t)_{t=0}^T$ in $\D\big([0,T];\P(\Omega)\big)$ such that $\rho_0=\rho=\mu_0$ and $\dot\rho_0=s_1$, and $\dot\mu_0=s_2$. Applying the convexity of $I_T+I_0$ to these two curves, it then follows from the arbitrariness of $T$ that $\cL$ is convex in the second argument. Naturally, this implies that $\cL$ is the convex dual of $\cH$.

To show that $\cL(\rho,\Q^T\rho)=0$, note that because of the convexity of $\cL$ we can write
\begin{equation}
  \cL(\rho,\Q^T\rho)=\sup_\xi\, \langle\xi,\Q^T\rho\rangle-\cH(\rho,\xi),
\label{eq:L from Feng-Kurtz}
\end{equation}
and that from \eqref{eq:H from Feng-Kurtz}, we have
\begin{equation*}
  \langle D_\xi\cH(\rho,0),\xi\rangle=-\int_\Omega\!\xi\underbrace{(\Q1)}_{=0}\,d\rho + \int_\Omega\!(\Q\xi)\,d\rho=\langle\Q\xi,\rho\rangle=\langle\xi,\Q^T\rho\rangle.
\end{equation*}
Since this identity is exactly the stationarity equation for the
supremum in \eqref{eq:L from Feng-Kurtz}, we know that $\xi=0$ is
a critical point of $\xi \mapsto \langle\Q\xi,\rho\rangle
-\cH(\rho,\xi)$. In fact, $\xi=0$ is a maximizer, because
$\cH$ is convex by \eqref{eq:emp measure H dual of L}. Therefore
\eqref{eq:L from Feng-Kurtz} becomes
\begin{equation*}
  \cL(\rho,\Q^T\rho)=-\cH(\rho,0)\stackrel{\eqref{eq:H from Feng-Kurtz}}{=}0.
\end{equation*}

Finally, for the minimizer $\Q^T\rho$ of $\cL(\rho,\cdot)$ to be
unique, it suffices that $\cL$ is strictly convex. This follows from the fact
that $\cL$ is the dual of $\cH$, and $\cH$ is differentiable in $\xi$ by \eqref{eq:H from Feng-Kurtz}.
\end{proof}

\subsection{The second main result}
\label{subsec:integrability and time-symmetry}

From Section~\ref{sec:L-functions to generalized flows} we know that two conditions on L-functions play a crucial role in constructing a corresponding generalized gradient flow: the integrability condition~\eqref{eq:integrability condition} and the time-symmetry condition~\eqref{eq:time-symmetric L}. We will now see that these two conditions are strongly related to the detailed balance condition of each $X_i(t)$ (and hence of the empirical process~$\rho^{(n)}$). We recall the definition here:

\begin{definition} A Markov process with generator $\Q$ satisfies \emph{detailed balance} with respect to a locally finite measure $\pi$ if for all $\psi,\phi\in \Dom\Q$ such that $\Q(\phi\pi),\Q(\psi\pi)\in L^1(\pi)$,
\begin{equation}
\label{eq:detailed balance self-adjoint}
  \int_\Omega\!\phi(x) (\Q\psi)(x)\,\pi(dx)=\int_\Omega\!\psi(x)(\Q\phi)(x)\,\pi(dx).
\end{equation}
\end{definition}
%We avoid the latter term to distinguish this microscopic notion from our notion of time symmetry, which is macroscopic in nature.
Naturally, this definition immediately implies that $\pi$ is invariant under $\Q$, i.e.
\begin{equation*}
  \int_\Omega\!(\Q\psi)\,d\pi=0 \qquad \text{for all } \psi\in \Dom\Q\/  \text{ such that }\Q\psi\in L^1(\pi).
\end{equation*}

With an invariant measure $\pi$ at hand, one can construct the
relative entropy
\begin{equation}
\label{eq:relative entropy}
  \E_\pi(\rho):=\int_\Omega\!\log\frac{d\rho}{d\pi}(x)\,\rho(dx).
\end{equation}
To avoid technicalities, we assume that the state space $\Z$ is chosen such that $\E_\pi$ is always finite. This excludes reducible processes, and we have $\pi>0$ on $\Z$.

The relative entropy~\eqref{eq:relative entropy} plays an important
role in the theory of large deviations of empirical measures. It
appears as the large-deviation rate of the empirical measure (i.e.\
when no time dependence is involved) of independent, identically
distributed random variables, by Sanov's Theorem. It turns out that
the relative entropy also plays an important role in the connection
between large deviations and generalized gradient flows. In fact, we
prove below that, if detailed balance holds, then $\E_\pi/2$ is
exactly the right choice for the functional driving a generalized
gradient flow, in the sense that both the integrability condition and
the time symmetry condition are satisfied. We also prove the reverse
statement. With detailed balance, we can therefore connect to the
theory set out in Section~\ref{sec:L-functions to generalized flows},
and construct a generalized gradient flow with symmetric
potentials~$\Psi,\Psi^*$. Note that the factor $1/2$ compensates for the factor~$2$ in the time-symmetry relation~\eqref{eq:time-symmetric L}.

We present the second main result in a compact version where the
assumptions are stronger than needed. We see the result as a principle
that holds in general contexts, but needs an extra proof for these
cases. In the next section we give possible extensions.
\begin{theorem}
\label{th:time-rev is detailed balance}Let an empirical process
$\rho^{(n)}$ satisfy a large-deviation principle of the form
\eqref{eq:path ldp} for all $T>0$ with corresponding
L-function~$\cL$. Assume that the invariant measure satisfies
$\pi\in\P(\Omega)$
and that $\cL$ is twice Fr\'echet differentiable near
$(\rho,s)=(\pi,0)$. Then the time symmetry~\eqref{eq:time-symmetric L} holds
with respect to $\cV_\cL = \frac12 D\E_\pi$ if and only if the generator $\Q$ satisfies detailed balance with respect to $\pi$. In particular, this implies that the integrability condition~\eqref{eq:integrability condition} holds with $\S=\frac12\E_\pi$.
\end{theorem}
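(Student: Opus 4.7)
The plan is to translate everything to the Hamiltonian side and exploit the $\cH$-form of time-symmetry given by Proposition~\ref{prop:time-symmetric L to gradient flow}(ii),\eqref{eq:time-symmetric H}. A short calculation shows that $D\E_\pi(\rho)=\log(d\rho/d\pi)$ modulo constants (constants pair trivially with $s\in\M_0(\Omega)$). Writing $\mu:=d\rho/d\pi$ and substituting into the Feng--Kurtz formula \eqref{eq:H from Feng-Kurtz}, one obtains
\begin{equation*}
\cH\bigl(\rho,\tfrac12\log\mu\pm\xi\bigr)=\int_\Omega \mu^{1/2}e^{\mp\xi}\,\Q\!\bigl(\mu^{1/2}e^{\pm\xi}\bigr)\,d\pi,
\end{equation*}
so that the $\cH$-symmetry around $\cV_\cL(\rho)=\tfrac12 D\E_\pi(\rho)$ reads precisely $\int\psi\,\Q\phi\,d\pi=\int\phi\,\Q\psi\,d\pi$ for the pair $\phi:=\mu^{1/2}e^{-\xi}$, $\psi:=\mu^{1/2}e^{\xi}$. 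This is the detailed-balance identity evaluated on $(\phi,\psi)$, and reducing the proof to this calculation is the heart of the argument.

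From here the two directions run in parallel. For \emph{detailed balance $\Rightarrow$ time symmetry with $\cV_\cL=\tfrac12 D\E_\pi$}, I would first establish $D_s\cL(\rho,0)=\tfrac12\log\mu$ by showing that $\xi^\star:=\tfrac12\log\mu$ is a critical point of the convex function $\xi\mapsto\cH(\rho,\xi)$: differentiating the displayed identity in $\xi$ at $\xi=0$ produces an expression that vanishes precisely by detailed balance applied to $(\eta\mu^{1/2},\mu^{1/2})$. Convexity of $\cH(\rho,\cdot)$ upgrades ``critical'' to ``minimizer'', and the local twice-differentiability hypothesis then lets me conclude via Legendre duality that this minimizer coincides with $D_s\cL(\rho,0)$. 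The displayed identity directly gives the $\cH$-symmetry, and Proposition~\ref{prop:time-symmetric L to gradient flow} transports it back to time symmetry of $\cL$. The converse direction runs the same chain backwards: $\cL$-time-symmetry with $\cV_\cL=\tfrac12 D\E_\pi$ gives $\cH$-symmetry, which gives detailed balance on the family of pairs $(\mu^{1/2}e^{-\xi},\mu^{1/2}e^{\xi})$ indexed by $\rho$ and $\xi$.

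The ``in particular'' assertion is then immediate, since $D_s\cL(\rho,0)=\tfrac12 D\E_\pi(\rho)=D(\tfrac12\E_\pi)(\rho)$ is exactly the integrability condition~\eqref{eq:integrability condition} with $\S_\cL=\tfrac12\E_\pi$. I expect the real difficulty to lie in the converse direction, specifically in propagating detailed balance from the special family $(\mu^{1/2}e^{-\xi},\mu^{1/2}e^{\xi})$ to the whole of $\Dom\Q$: varying $\rho\in\Z$ and $\xi\in T^*_\rho\Z$ realizes arbitrary pairs of positive test functions (the normalization $\int\phi\psi\,d\pi=1$ is removed by bilinear rescaling, since both sides of detailed balance are bilinear in $(\phi,\psi)$), and the extension to arbitrary, non-positive $\phi,\psi\in\Dom\Q$ uses the shifts $\phi\mapsto\phi+c$, $\psi\mapsto\psi+c'$ together with $\Q 1=0$ and the invariance of $\pi$. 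Carrying this out rigorously within whatever function-space framework is fixed for $\Z,T\Z,T^*\Z$ is the main technical obstacle, and this is presumably why the ``compact version'' of the theorem hypothesizes only local twice-differentiability near $(\pi,0)$ and defers the general case to the discussion of extensions in the next subsection.
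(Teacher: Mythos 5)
Your route is genuinely different from the paper's, and it is essentially the calculation the authors relegate to the remark following the theorem (where detailed balance is shown to imply the $\cH$-symmetry~\eqref{eq:time-symmetric H} by exactly your substitution $\xi\mapsto\tfrac12\log\tfrac{d\rho}{d\pi}\pm\xi$ in~\eqref{eq:H from Feng-Kurtz}). The paper's actual proof avoids the explicit Feng--Kurtz formula entirely: for ``$\Leftarrow$'' it uses the probabilistic characterization of detailed balance as time-reversal invariance of the path measure, so that $I_0(\rho_0)+I_T(\rho)=I_0(\rho_T)+I_T(\sigma_T\rho)$, identifies $I_0=\E_\pi$ via Sanov, and differentiates in $T$; this explains \emph{why} the driving functional is $\tfrac12\E_\pi$ rather than verifying it. For ``$\Rightarrow$'' it works purely locally at $(\pi,0)$, introducing $a(\e_1,\e_2,\e_3)=\cL\bigl((1{+}\e_1\phi)\pi,\Q^T[(\e_2\phi{+}\e_3\psi)\pi]\bigr)$ and extracting detailed balance from the symmetry of the mixed partial $D_{23}a(0,0,0)=D_{32}a(0,0,0)$. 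This is precisely why the theorem hypothesizes only twice Fr\'echet differentiability \emph{near} $(\pi,0)$.

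Two caveats on your version. First, your argument takes the explicit form~\eqref{eq:H from Feng-Kurtz} of $\cH$ as an input; the theorem as stated assumes only the LDP with some L-function $\cL$, and the paper's proof never invokes that formula. Within Section~3 this is a defensible extra hypothesis (it is assumed in Proposition~3.2 and in the remark), but you should state it. Second, and more substantively, your converse is not local: the family $(\mu^{1/2}e^{-\xi},\mu^{1/2}e^{\xi})$ with $\rho$ near $\pi$ only produces pairs whose product $\phi\psi$ is near a constant, so to reach arbitrary pairs in $\Dom\Q$ you must use the time symmetry at \emph{all} $\rho\in\Z$ and then run the rescaling/shift/density argument you sketch. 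That argument is plausible (bilinearity removes the normalization, $\phi\mapsto\phi+c$ together with $\Q1=0$ and invariance of $\pi$ removes positivity), but it leans on unspecified properties of $\Z$ and $T^*_\rho\Z$ --- e.g.\ whether $\phi\psi\pi\in\Z$ and whether $\tfrac12\log(\psi/\phi)$ is an admissible element of $T^*_\rho\Z\subset C_b(\Omega)$ --- and you have not carried it out. The paper's second-derivative argument sidesteps all of this, which is the main thing its approach buys over yours; what yours buys is an explicit, formula-level identification of $\cH$-symmetry with the detailed-balance pairing, valid in both directions once the density step is secured.
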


\noindent
By Theorem~\ref{th:L function to generalized gradient system}, it follows that $\cL$ generates a generalized gradient system driven by the functional $\frac12 \E_\pi$.

\begin{remark}\label{rm:TimeRevDB}
The twice differentiability of $\cL$, which is only used for the direction
``$\Rightarrow$'', can be weakened by working on a suitable 
subspace. This requires a subspace $\scrE \subset C_b(\Omega)$ with the following properties:
\begin{enumerate}
\item $\scrE_1:=\mathrm{span}\big(\scrE \cup \{1\}\big)$ is a
  $C_b$-dense subset of $\Dom\Q$, and $\Q\scrE_1 = \{\Q\phi:
  \phi\in\scrE_1\}$ is a dense subset of $L^1(\pi)$; 
\item For every $\phi \in \scrE$ we have $\int_\Omega \phi\, d\pi =
  0$ and $\Q^T(\phi\pi)$ is a finite measure;
\item For every $\phi,\psi\in \scrE$, the function
\begin{equation}
\label{def:a}
a(\e_1,\e_2,\e_3) := \cL\Bigl((1+\e_1\phi)\pi,\Q^T\bigl[(\e_2\phi+\e_3\psi)\pi\bigr]\Bigr)
\end{equation}
is well-defined and twice continuously differentiable in a
neighborhood of $(0,0,0)$. \EEE
\end{enumerate}
\end{remark}

\begin{proof}
We first prove the reverse statement. 
\emph{``$\Leftarrow$''} Since $\pi\in\P(\Omega)$, detailed balance~\eqref{eq:detailed balance self-adjoint} is equivalent to (see for example~\cite[Prop.~II.5.3]{Liggett1985})
\begin{equation}
  \Prob\!\left((\rho^{(n)}_t)_{t=0}^T \in A\right) = \Prob\!\left((\rho^{(n)}_{t})_{t=0}^T\in \sigma_T A\right)
\label{eq:detailed balance time-reversal formulation}
\end{equation}
for all measurable $A\subset\D([0,T];\P(\Omega))$, where $(\sigma_T\rho)_t:=\rho_{T-t}$ denotes the time-reversal. In the probabilities above the curves $\rho^{(n)}$ are assumed to start, at $t=0$, from empirical measures of points drawn from $\pi$. Therefore, by definition of the large-deviation principle,
\begin{equation*}
  I_0(\rho_0)+I_T(\rho) = I_0(\rho_T)+I_T(\sigma_T\rho)
\end{equation*}
for any curve $(\rho_t)_{t=0}^T$. Moreover, by Sanov's Theorem $I_0=\E_\pi$, so that
\begin{equation*}
  \E_\pi(\rho_0) + \int_0^T\!\cL(\rho_t,\dot\rho_t)\,dt = \E_\pi(\rho_T) + \int_0^T\!\cL(\rho_{T-t},-\dot\rho_{T-t})\,dt.
\end{equation*}
Differentiating with respect to $T$ then yields \eqref{eq:time-symmetric L} with $\cV_\cL=\frac12D\E_\pi$, because $(\rho_t)_{t=0}^T$ was arbitrary.

\emph{``$\Rightarrow$''.} We now assume~\eqref{eq:time-symmetric L}
and prove the detailed-balance property~\eqref{eq:detailed balance self-adjoint}
relying only on the weakened differentiability of Remark \ref{rm:TimeRevDB}.  By the density assumption on $\scrE$ it is sufficient to prove~\eqref{eq:detailed balance self-adjoint} for all $\phi,\psi\in\scrE$; fix such $\phi,\psi\in\scrE$ and the corresponding function $a$ defined in~\eqref{def:a}.

Using the definition of $\E_\pi$, we calculate that $D^2\E_\pi(\pi)[s_1,s_2]=\int_\Omega\!\frac{ds_1}{d\pi}\frac{ds_2}{d\pi}\,d\pi$, and therefore
\[
\int_\Omega\!\psi(\Q\phi)\,d\pi =\int_\Omega\! \phi\, d\Q^T(\psi\pi) = D^2\E_\pi(\pi)[\phi\pi,\Q^T(\psi\pi)].
\]
The property of detailed balance is therefore proved if we show that for all $\phi,\psi\in\scrE$
\begin{equation}
\label{eq:symmetry-R}
D^2\E_\pi(\pi)[\phi\pi,\Q^T(\psi\pi)] = D^2\E_\pi(\pi)[\psi\pi,\Q^T(\phi\pi)].
\end{equation}

\medskip
To prove this, first note that since $\cL\bigl((1+\e_1\phi)\pi,\Q^T[(1+\e_1\phi)\pi]+s\bigr)$ is minimal at $s=0$, we have
\[
D_3 a(\e_1,\e_1,0) = 0.
\]
Differentiating this expression with respect to $\e_1$ we find that
\begin{equation}
\label{eq:a13-a23}
D_{13}a(0,0,0) + D_{23} a(0,0,0) =0.
\end{equation}

Secondly, the time symmetry \eqref{eq:time-symmetric L} with
respect to $\frac12 D\E_\pi$ reads explicitly 
\begin{equation}
\label{eq:LLDEpi}
  \cL(\rho,s)-\cL(\rho,-s)= \langle D\E_\pi(\rho),s\rangle.
\end{equation}
Using appropriate $\rho$ and $s$ we obtain
\[
a(\e_1,0,\e_3) - a(\e_1,0,-\e_3) = 2\bigl\langle D\E_\pi\bigl((1+\e_1\phi)\pi\bigr),\Q^T(\e_3\psi\pi)\bigr\rangle,
\]
and by differentiating with respect to $\e_3$ we find
\[
D_3a(\e_1,0,0) = \bigl\langle D\E_\pi\bigl((1+\e_1\phi)\pi\bigr),\Q^T(\psi\pi)\bigr\rangle.
\]
Differentiating again with respect to $\e_1$ yields
\[
D_{13}a(0,0,0) = D^2\E_\pi(\pi)[\phi\pi,\Q^T(\psi\pi)],
\]
which combines with~\eqref{eq:a13-a23} to give
\[
D_{23} a(0,0,0) = -D^2\E_\pi(\pi)[\phi\pi,\Q^T(\psi\pi)].
\]
This identity implies~\eqref{eq:symmetry-R}, since the left-hand side
is unchanged upon exchanging~$\phi$ and~$\psi$, because
$D_{23}a(0,0,0)=D_{32} a(0,0,0)$. This proves~\eqref{eq:detailed balance self-adjoint}. 

\medskip

Finally, differentiating \eqref{eq:LLDEpi} with respect to $s$
provides the desired integrability condition $D_s\cL(\rho,0)=\tfrac12D\E_\pi(\rho)$. 
\end{proof}

Theorem~\ref{th:time-rev is detailed balance} implies that the entropy functional~\eqref{eq:relative entropy} is the only functional (up to constants) for which $\cL$ is time-symmetric. This entropy functional enters the proof because of Sanov's theorem, and measures the cost to deviate from the equilibrium $\pi$.
%The time-symmetry condition~\eqref{eq:time symmetry} states that the probabilistic cost to move forward in time, minus the cost to move backward in time is twice the change in entropy. Heuristically, this explains why we included the factor $1/2$ in the entropy~\eqref{eq:relative entropy}: to capture the cost of moving forward in time only.

\begin{remark} Using the explicit form of $\cH$ from \eqref{eq:H from Feng-Kurtz} together with Proposition~\ref{prop:time-symmetric L to gradient flow}(ii), we can also see more directly that detailed balance of the generator $\Q$ is equivalent to the time symmetry~\eqref{eq:time-symmetric H}:
\begin{align*}
  \cH\big(\rho,\tfrac12D\E_\pi(\rho)+\xi\big)&=\int_\Omega\! e^{-\frac12\log\frac{d\rho}{d\pi}-\frac12-\xi}\big(\Q e^{\frac12\log\frac{d\rho}{d\pi}+\frac12+\xi}\big)\,d\rho\\
                                 &=\int_\Omega\! e^{\frac12\log\frac{d\rho}{d\pi}-\frac12-\xi}\big(\Q e^{\frac12\log\frac{d\rho}{d\pi}+\frac12+\xi}\big)\,d\pi\\
                                 &\!\mathop{=}^{\eqref{eq:detailed balance self-adjoint}} \int_\Omega\! e^{\frac12\log\frac{d\rho}{d\pi}+\frac12+\xi}\big(\Q e^{\frac12\log\frac{d\rho}{d\pi}-\frac12-\xi}\big)\,d\pi\\
                                 &= \int_\Omega\! e^{-\frac12\log\frac{d\rho}{d\pi}-\frac12+\xi}\big(\Q e^{\frac12\log\frac{d\rho}{d\pi}+\frac12-\xi}\big)\,d\rho
                                  =\cH\big(\rho,\tfrac12D\E_\pi(\rho)-\xi\big),
\end{align*}
where we have used the linearity of $\Q$ to take the constant $e^{-1}$
out of the parenthesis.
\end{remark}

\subsection{Generalizations}
%\label{subsec:generalizations}

\subsubsection{Locally finite invariant measures}

Observe that in \eqref{eq:detailed balance self-adjoint} and \eqref{eq:relative entropy}, the invariant measure $\pi$ does not need to be a probability measure. This relaxation is not uncommon in probability theory (see for example \cite[Sect.~II.5]{Liggett1985}), and can be quite useful when dealing with generators on unbounded domains for which an invariant probability measure does not exist, such as the diffusion equation on $\R^d$. It also implies that the value $\pi(\Omega)$ is not relevant. Indeed, if it is finite, this constant will vanish when one considers entropy differences $\E_\pi(\rho_2)-\E_\pi(\rho_1)$ or derivatives~$D\E_\pi(\rho)$. 

We now show that Theorem~\ref{th:time-rev is detailed balance} can be generalized to locally finite invariant measures, under the condition that the rate functional has compact level sets. This assumption is very common in the theory of large deviations, where it is often called `goodness' of the rate functional.

\begin{proposition} Theorem~\ref{th:time-rev is detailed balance} remains true if the invariant measure $\pi$ is only locally finite, and the rate functional $\rho\mapsto
I_0(\rho_0)+\int_0^T\!\cL(\rho_t,\dot\rho_t)\,dt$ has compact level
sets in the topology of the large-deviation principle.
\label{prop:time-rev is detailed balance, loc fin inv measure}
\end{proposition}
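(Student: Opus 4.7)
The plan is to bypass Sanov's theorem---which was the one step in the proof of Theorem~\ref{th:time-rev is detailed balance} that genuinely required $\pi\in\P(\Omega)$---by working directly with the Feng--Kurtz representation $\cH(\rho,\xi)=\int_\Omega e^{-\xi}(\Q e^\xi)\,d\rho$ from~\eqref{eq:H from Feng-Kurtz}. This formula is well-defined for every probability measure $\rho$ irrespective of whether $\pi$ itself is a probability measure, and the compact level sets (goodness) assumption is precisely what ensures that the LDP remains compatible with the Feng--Kurtz machinery, so that the dual pair $(\cL,\cH)$ continues to satisfy both~\eqref{eq:LH dual pair} and~\eqref{eq:H from Feng-Kurtz} exactly as in the case $\pi\in\P(\Omega)$.

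For the direction \emph{detailed balance $\Rightarrow$ time symmetry}, I would replay verbatim the computation carried out in the unnumbered remark at the end of Section~\ref{subsec:integrability and time-symmetry}. For any probability measure $\rho=f\pi$ with $f>0$ and $\int f\,d\pi=1$ one has $D\E_\pi(\rho)=\log f$ up to an additive constant, which is annihilated by $\Q$ since $\Q 1\equiv 0$. Substituting into~\eqref{eq:H from Feng-Kurtz} and applying~\eqref{eq:detailed balance self-adjoint} to swap the two integrands yields $\cH\bigl(\rho,\tfrac12 D\E_\pi(\rho)+\xi\bigr)=\cH\bigl(\rho,\tfrac12 D\E_\pi(\rho)-\xi\bigr)$, which is~\eqref{eq:time-symmetric H} and hence, by Proposition~\ref{prop:time-symmetric L to gradient flow}(ii), equivalent to the time-symmetry condition~\eqref{eq:time-symmetric L} for $\cL$.

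For the converse direction I would reverse the same algebra. Time symmetry of $\cH$ evaluated at $\rho=f\pi$ gives
\[
\int_\Omega f^{1/2}e^{-\xi}\,\Q(f^{1/2}e^{\xi})\,d\pi
\;=\;\int_\Omega f^{1/2}e^{\xi}\,\Q(f^{1/2}e^{-\xi})\,d\pi.
\]
Setting $g:=f^{1/2}e^{\xi}$ and $h:=f^{1/2}e^{-\xi}$ this reads $\int h\,\Q g\,d\pi=\int g\,\Q h\,d\pi$ whenever $gh=f>0$ with $\int gh\,d\pi=1$. The normalization constraint is removed by rescaling $g$ and $h$ by positive constants (allowed because $\Q 1\equiv 0$), and a density/localization argument in the spirit of Remark~\ref{rm:TimeRevDB}---choosing $f$ supported on compact sets of finite $\pi$-measure and $\xi$ in a sufficiently rich subspace $\scrE$---extends the identity to all admissible pairs $\phi,\psi$ appearing in~\eqref{eq:detailed balance self-adjoint}.

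The main obstacle I anticipate is executing this density/localization step rigorously: with $\pi$ only locally finite one must work with probability measures $\rho=f\pi$ concentrated where $\pi$ is finite, while still arranging for the pairs $(g,h)=(f^{1/2}e^{\xi},f^{1/2}e^{-\xi})$ to sweep out a dense class of positive test functions against which detailed balance can be tested. The compact level sets assumption is expected to supply the uniform estimates and exponential tightness required to justify these approximations, and simultaneously to guarantee that the Feng--Kurtz formula remains valid when the invariant measure fails to be finite.
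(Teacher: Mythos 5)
Your proposal sidesteps rather than proves the proposition as stated, and the clearest symptom is that the compact-level-sets hypothesis never does any work in your argument. In the paper's proof that hypothesis is the central ingredient of the one direction that actually needs a new argument, namely ``detailed balance $\Rightarrow$ time symmetry'': goodness of the rate functional gives exponential tightness of $\rho^{(n)}$ \cite[Lem.~3.5]{Feng2006}, which permits a time-discretization; detailed balance with respect to the merely locally finite $\pi$ yields the discrete symmetry $\hat I_t(\rho_t|\rho_0)-\hat I_t(\rho_0|\rho_t)=\E_\pi(\rho_t)-\E_\pi(\rho_0)$ for the conditional rates of~\eqref{eq:conditional ldp}; the Markov property and the contraction principle identify the finite-dimensional rate as $\sum_k \hat I_{t_k-t_{k-1}}(\rho_{t_k}|\rho_{t_{k-1}})$; and \cite[Th.~4.28]{Feng2006} reconstructs $I_T$ as the supremum of these sums over partitions, so that telescoping the entropy increments gives $I_T(\rho)=\E_\pi(\rho_T)-\E_\pi(\rho_0)+I_T(\sigma_T\rho)$ and differentiation in $T$ gives~\eqref{eq:time-symmetric L}. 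This is precisely the replacement for the Sanov/stationary-start argument of Theorem~\ref{th:time-rev is detailed balance}, which breaks down because one cannot launch the process from $\pi\notin\P(\Omega)$. Your alternative for this direction --- the algebraic symmetry of $\cH\bigl(\rho,\tfrac12 D\E_\pi(\rho)\pm\xi\bigr)$ via~\eqref{eq:detailed balance self-adjoint} --- is essentially the paper's own remark following Theorem~\ref{th:time-rev is detailed balance}, and it does indeed survive the passage to locally finite $\pi$; but it requires assuming the explicit Feng--Kurtz form~\eqref{eq:H from Feng-Kurtz} of $\cH$, which is not among the proposition's hypotheses (the proposition is stated purely in terms of the large-deviation principle and goodness). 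So what you have is a correct proof of a differently-hypothesized statement, not of this one, and your claim that goodness ``ensures that the LDP remains compatible with the Feng--Kurtz machinery'' is not a substitute for the actual use the paper makes of it.

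For the converse direction, ``time symmetry $\Rightarrow$ detailed balance,'' no new argument is needed at all: the proof in Theorem~\ref{th:time-rev is detailed balance} via the function $a(\e_1,\e_2,\e_3)$ of Remark~\ref{rm:TimeRevDB} and the identity $D_{13}a=-D_{23}a=D_{32}a$ nowhere uses $\pi(\Omega)<\infty$, and the paper simply observes that it carries over. Your proposed reversal of the algebra, producing $\int h\,\Q g\,d\pi=\int g\,\Q h\,d\pi$ for pairs $g=f^{1/2}e^{\xi}$, $h=f^{1/2}e^{-\xi}$ and then extending by density and localization, is therefore unnecessary extra work --- and its hardest step, which you correctly identify as the density/localization argument needed to sweep out all admissible test pairs in~\eqref{eq:detailed balance self-adjoint} from functions of the constrained product form $gh=f>0$, is left open. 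The net assessment: the genuinely new content of the proposition lies entirely in the $\Leftarrow$ direction, and there your argument either needs the extra hypothesis~\eqref{eq:H from Feng-Kurtz} or needs to be replaced by the exponential-tightness/time-discretization argument that the goodness assumption is there to enable.
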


\begin{proof} The forward proof of Theorem~\ref{th:time-rev is detailed balance} remains valid; we only prove the reverse statement here. 

By \cite[Lem.~3.5]{Feng2006} the compact level sets of the rate functional imply that
the sequence $\rho^{(n)}$ is exponentially tight, which allows the use of a time-discrete approximation. In the time-discrete setting, detailed balance with respect to a locally finite invariant measure implies the following discrete time-symmetry property \cite[Prop.~8.2.2]{Renger2013}:
\begin{equation}
  \hat I_t(\rho_t|\rho_0)-\hat I_t(\rho_0|\rho_t) = \E_\pi(\rho_t)-\E_\pi(\rho_0),
\label{eq:discrete time-symmetry}
\end{equation}
where $\hat I_t$ is the conditional large-deviation rate~\eqref{eq:conditional ldp}.

First observe that, due to the contraction principle~\cite[Th.~4.2.1]{Dembo1998}, the sequence $\big(\rho^{(n)}_{t_k}\big)_{k=0}^K$ satisfies a large-deviation principle in $\P(\Omega)^K$ for any subset $\{t_0,\hdots t_K\}$ of $[0,T]$. Moreover, because of the Markov property, there holds
\begin{multline*}
  \Prob\big(\rho^{(n)}_{t_K}\in A_K, \rho^{(n)}_{t_{K-1}}\in A_{K-1},\hdots,\rho^{(n)}_{t_1}\in A_1|\rho^{(n)}_{t_0}=\rho_{t_0}\big)\\
  =\Prob\big(\rho^{(n)}_{t_K}\in A_K|\rho^{(n)}_{t_{K-1}}\in A_{K-1})\hdots\Prob(\rho^{(n)}_{t_1}\in A_1|\rho^{(n)}_{t_0}=\rho_{t_0}\big).
\end{multline*}
By definition of the large-deviation principle, the rate functional corresponding to the sequence $\big(\rho^{(n)}_{t_k}\big)_{k=0}^K$ can therefore be written as
$\sum_{k=1}^K \hat I_{t_k-t_{k-1}}(\rho_{t_k}|\rho_{t_{k-1}})$.

It then follows from \eqref{eq:discrete time-symmetry}, the exponential tightness, and \cite[Th.~4.28]{Feng2006} that the pathwise large-deviation rate can be written as
\begin{align*}
  I_T(\rho) &= \sup_{0=t_0<t_1\hdots<t_K=T} \sum_{k=1}^K \hat I_{t_k-t_{k-1}}(\rho_{t_k}|\rho_{t_{k-1}}) \notag \\
                     &= \sup_{0=t_0<t_1\hdots<t_K=T} \sum_{k=1}^K \hat I_{t_k-t_{k-1}}(\rho_{t_{k-1}}|\rho_{t_k}) + \E_\pi(\rho_{t_k})-\E_\pi(\rho_{t_{k-1}}) \notag \displaybreak[0]\\
                     &= \E_\pi(\rho_T)-\E_\pi(\rho_0) + \sup_{0=t_0<t_1\hdots<t_K=T} \sum_{k=1}^K \hat I_{t_k-t_{k-1}}(\rho_{t_{k-1}}|\rho_{t_k})\notag \\
                     &= \E_\pi(\rho_T)-\E_\pi(\rho_0) + I_T(\sigma_T\rho), %\label{eq:detailed balance pathwise reversal}
\end{align*}
where $\sigma_T$ is again the time-reversal operator. Differentiation with respect to $T$ then gives~\eqref{eq:time-symmetric L}.
\end{proof}

\subsubsection{General large-deviation principles}

A large part of the theory developed above remains true if we consider more general sequences of random variables, that is, not necessarily empirical measures. Let $Z^{(n)}_t$ be a sequence of stochastic processes with generator $\Q^{(n)}$, in a state space $\Z$, with a formal differentiable structure as in Section~\ref{sec:L-functions to generalized flows}. (Note that in case of the empirical process as before, the $\Q^{(n)}$ plays the role of the generator of the entire empirical process).
Assume that 
\begin{enumerate}
\item $\Q^{(n)}$ satisfies detailed balance with respect to an invariant measure $\pi^{(n)}\in\P(\Z)$,
\item $Z^{(n)}$ converges narrowly (pointwise in $t$) to a deterministic curve, satisfying the limiting equation
  \begin{equation*}
    \dot z_t=\cA(z_t),
  \end{equation*}
\item For all $T>0$, $Z^{(n)}$ satisfies a large-deviation principle of the form
  \begin{align}
  \label{eq:path ldp general process}
    &\Prob\!\left(\big(Z^{(n)}_t\big)_{t=0}^T \approx z\right) \mathop{\sim}_{n\to\infty} e^{-n \left(I_0(z_0)+I_T(z)\right)},
    &I_T(z):=\int_0^T\!\cL(z_t,\dot z_t)\,dt,
  \end{align}
  where $I_0$ is the large-deviation rate functional of $Z^{(n)}_0$.
\item The sequence of random variables in $\Z$ with law $\pi^{(n)}$ satisfies a large-deviation principle with rate $\E$.
\end{enumerate}
We then have the following generalization of Theorem~\ref{th:time-rev is detailed balance}:
\begin{proposition}
Under the assumptions 1--4 above, the time symmetry~\eqref{eq:time-symmetric L} holds
with respect to $\cV_\cL = \frac12 D\E$, and the integrability condition~\eqref{eq:integrability condition} holds with $\S=\frac12\E$.
\end{proposition}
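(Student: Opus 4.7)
My plan is to mimic the ``$\Leftarrow$'' direction of Theorem~\ref{th:time-rev is detailed balance} step-by-step, with Sanov's theorem replaced by the hypothesized LDP for the invariant measures (assumption~4) and with the empirical-process path space $\D([0,T];\P(\Omega))$ replaced by $\D([0,T];\Z)$. The only role that Sanov and $\P(\Omega)$ played in the original proof was to identify the initial rate functional $I_0$ with the relative entropy $\E_\pi$; here assumption~4 hands us this identification directly.

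First, I would translate detailed balance of $\Q^{(n)}$ with respect to $\pi^{(n)}$ into the path-wise reversibility identity: when $Z^{(n)}$ is started from $\pi^{(n)}\in\P(\Z)$, standard Markov theory (cf.\ \cite[Prop.~II.5.3]{Liggett1985}) gives
\begin{equation*}
  \Prob\!\left((Z^{(n)}_t)_{t=0}^T \in A\right) = \Prob\!\left((Z^{(n)}_t)_{t=0}^T \in \sigma_T A\right)
\end{equation*}
for all measurable $A\subset \D([0,T];\Z)$, where $\sigma_T$ denotes time reversal. Applying the path LDP (assumption~3) to both sides, together with $I_0=\E$ coming from assumption~4 and the uniqueness of rate functionals, I would obtain, for every admissible curve $z$,
\begin{equation*}
  \E(z_0) + \int_0^T\!\cL(z_t,\dot z_t)\,dt \;=\; \E(z_T) + \int_0^T\!\cL(z_t,-\dot z_t)\,dt.
\end{equation*}

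Next, since this identity holds for all $T$ and all smooth curves $z$, I would differentiate in $T$ at an arbitrary time and use the freedom to prescribe $(z_T,\dot z_T)$ independently, arriving at the pointwise relation
\begin{equation*}
  \cL(z,s) - \cL(z,-s) = \langle D\E(z),s\rangle \qquad \text{for all } (z,s)\in T\Z,
\end{equation*}
which is exactly the time-symmetry condition~\eqref{eq:time-symmetric L} with $\cV_\cL=\tfrac12 D\E$. Differentiating once more in $s$ at $s=0$ then yields $2D_s\cL(z,0)=D\E(z)$, i.e.\ the integrability condition with $\S=\tfrac12\E$. By Theorem~\ref{th:L function to generalized gradient system}, $\cL$ thereby induces a generalized gradient system with symmetric dissipation potentials driven by $\tfrac12\E$.

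I expect the only delicate step to be the passage from the path-measure equality to the equality of rate functionals, since an LDP only delivers inequalities on open and closed sets. The resolution is standard: a rate functional is uniquely determined by the exponential asymptotics of the underlying probabilities, so equality of measures forces equality of rates. A minor secondary issue is the passage from the integral identity to its pointwise form, which requires enough regularity of $\cL$ and a rich enough supply of smooth test curves in $\Z$ to independently realize any pair $(z,s)\in T\Z$; this parallels the step at the end of the original proof and should pose no difficulty in the abstract differentiable setup of Section~\ref{sec:L-functions to generalized flows}.
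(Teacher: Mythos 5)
Your proposal is correct and follows exactly the route the paper takes: its proof of this proposition consists of the single remark that one repeats the ``$\Leftarrow$'' argument of Theorem~\ref{th:time-rev is detailed balance}, replacing the time-reversal identity~\eqref{eq:detailed balance time-reversal formulation} by its analogue for $Z^{(n)}$ and $\E_\pi$ by $\E$, which is precisely what you do (including the final differentiations in $T$ and in $s$). Your added remarks on the uniqueness of rate functionals and on the passage from the integral to the pointwise identity address exactly the points the paper leaves implicit.
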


\noindent 
As before, a consequence is that $\cL$ generates a generalized gradient system driven by the functional $\tfrac12\E$ (if $\cL$ is an L-function; see Remark~\ref{rem:general ldp L-function} below).

\begin{proof}
  The proof is analoguous to the proof of ``$\Leftarrow$'' in Theorem~\ref{th:time-rev is detailed balance}, where we replace 
  \eqref{eq:detailed balance time-reversal formulation} by
\begin{equation*}
  \Prob\!\left((Z^{(n)}_t)_{t=0}^T \in A\right) = \Prob\!\left(\big(Z^{(n)}_t\big)_{t=0}^T\in \sigma_T A\right)\!,
\end{equation*}
and $\E_\pi$ by $\E$.
\end{proof}

\begin{remark} In most applications, the $\cL$ from \eqref{eq:path ldp general process} will again be an L-function, as can be seen as follows.
\begin{enumerate}[(i)]
\item The non-negativity of $\cL$ follows from the definition of the large-deviation principle.
\item Since rate functions can be assumed to be lower semicontinuous, in many cases this forces the dependence of $\cL$ on $\dot z$ to be convex (see e.g.~\cite[Ch.~4]{Giusti2003}, \cite[Th.~13.1.3]{Attouch2006} and \cite[Th.~3.15]{Dacorogna2000}). 
\item By the Portmanteau~Theorem, the pointwise narrow convergence of $Z^{(n)}_t$ is equivalent to $\liminf_{n\to\infty}\Prob\big(Z^{(n)}_t\in U\big)\geq 1$ for all narrowly open $U\subset\Z$ containing the deterministic solution at time $t$. Then for any $\epsilon>0$ there is a $N\geq1$ such that for all $n\geq N$
\begin{equation*}
  0\leq \frac1n\log\Prob\big(Z^{(n)}_t\in U\big)\leq\frac1n\log(1-\epsilon)\to0.
\end{equation*}
Hence for any fixed $t$, the conditional rate functional $\hat I_t$
satisfies $\hat I_t\big(z_t|z_0)=0$ whenever $z_t$ solves $\dot z_\tau=\cA(z_\tau)$ up to time $\tau=t$. Therefore, by a similar time-discretization argument as in the proof of Proposition~\ref{prop:time-rev is detailed balance, loc fin inv measure}, this gives $\cL\big(z_t,\cA(z_t)\big)=0$.

In general, the uniqueness of the minimizer $\cA(z)$ of $\cL(z,\cdot)$ does not follow from the large-deviation principle~\eqref{eq:path ldp general process}. In practice, this is often related to finding the right order of $n$ for which the large-deviation $\cL$ does have a unique minimizer.
\end{enumerate}
\label{rem:general ldp L-function}
\end{remark}

\section{Applications}
\label{sec:applications}

One of the questions that led to this study concerns the connection
between large deviations of finite-state-space Markov processes and
the entropy-driven Riemannian gradient structure uncovered in
\cite{Maas2011,Chow2012,Mielke2012a}. As we described in the
introduction, this structure is a generalization of the Wasserstein
gradient-flow structure in continuous spaces, and therefore the
question naturally arises whether this structure connects to the large
deviations of the empirical process of i.i.d. copies of the underlying
Markov process, following the line of Section~\ref{sec:L-functions to generalized flows}.
Below we calculate these large deviations, and
interestingly these do \emph{not} generate the above-mentioned
Riemannian gradient structure, although the driving functional is a
multiple of the relative entropy in both cases. However, by the
arguments in Sections~\ref{sec:L-functions to generalized flows}
and~\ref{sec:ldp L-functions} they do generate a well-defined and
symmetric generalized gradient structure, as we show in the first
application.

In the second application we consider the diffusion-drift equation. The connection between the large deviations of the empirical measure and the Wasserstein gradient flow of the free energy was already made in \cite{Adams2011,Duong2013a} for the discrete-time setting, and in \cite[Sect.~4.2]{Adams2012} in the (continuous-time) L-function setting. Nevertheless, we discuss below how the theory developed in Sections~\ref{sec:L-functions to generalized flows} and \ref{sec:ldp L-functions} can be used to reproduce this result.

\subsection{Finite-state Markov chains}
\label{subsec:Markov stochsys}

We now apply the theory of Sections~\ref{sec:L-functions to generalized flows} and \ref{sec:ldp L-functions} to continuous-time Markov chains on a finite state space $\Omega=\{1,\hdots,J\}$. In this case we can identify
\begin{align*}
  \Z:=\P(\{1,\hdots,J\})=\bigg\{\rho\in\R^J_+:\sum_{j=1}^J \rho_j =1\bigg\},
\end{align*}
and the generator $\Q:\R^J\to \R^J$ can be identified with a Markov intensity matrix $Q\in\R^{J\times J}$, i.e.
\begin{align}
  Q_{ij}\geq0\qquad \text{for } i\neq j, &&\text{and}&& \sum_{j=1}^J Q_{ij}=0.
\label{eq:Markov matrix}
\end{align}
The natural tangent and cotangent spaces to $\P(\{1,\hdots,J\})$ are
\begin{align*}
  T_\rho\Z=T^*_\rho\Z:=\bigg\{s\in\R^J:\sum_{i=1}^J s_i=0\bigg\},
\end{align*}
with the usual Euclidean inner product $\langle s,\xi\rangle:=s\cdot\xi$.

\paragraph{Particle system and large deviations.}
We now take independent Markov processes $X_1(t),X_2(t),\hdots$ in $\{1,\hdots,J\}$ with generator matrix $Q$. By identifying probability measures on $\{1,\hdots,J\}$ with vectors in $\R^J$, the empirical process~\eqref{eq:empirical process} is replaced by
\begin{equation*}
  t\mapsto \rho^{(n)}_t:=\frac1n\sum_{k=1}^n \Indicator_{X_k(t)},
\end{equation*}
which is a random variable in $\P(\{1,\hdots,J\})$ that counts the weighted number of particles at each site. Under suitable initial conditions, this process converges almost surely to the solution of \eqref{eq:Markov evolution}. We prove in Theorem~\ref{th:Markov ldp} in the Appendix that a corresponding large-deviation principle~\eqref{eq:path ldp} holds in $BV\!\big(0,T;\P(\{1,\hdots,J\})\big)$ with the weak-\textasteriskcentered\ topology, with
\begin{align}
\label{eq:Markov Lagrangian}
  \cL(\rho,s)=\sup_{\xi\in T_\rho^*\Z} \xi\cdot s - \cH(\rho,\xi), &&\text{and}&&   \cH(\rho,\xi)= \sum_{i,j=1}^J \rho_i Q_{ij}\left(e^{\xi_j-\xi_i}-1\right)\!.
\end{align}

\paragraph{Towards generalized gradient flows.} Let $\pi\in\P(\Omega)$ be invariant, i.e.\ $Q^T\pi=0$. We implicitly assume that $\pi$ has only non-zero coordinates\ {If $\pi_i=0$ for some $i$, then the space $\Z$ needs to be restricted to those $\rho$ for which $\rho_i=0$.}. The relative entropy functional~\eqref{eq:relative entropy} then becomes
\begin{equation*}
  \E_\pi(\rho)=\sum_{i=1}^J \rho_i \log \frac{\rho_i}{\pi_i}.
\end{equation*}
For this application, the role of the detailed-balance condition will appear explicitly in the calculations; let us not assume this condition for now. Following the construction from Lemma~\ref{lem:L=Psi Psi*}, we define (taking $\S=\tfrac12\E_\pi$)
\begin{align}
  \Psi_\cL^*(\rho,\xi) &:=\cH\big(\rho,\tfrac12D\E_\pi(\rho)+\xi\big)-\cH\big(\rho,\tfrac12D\E_\pi(\rho)\big) \notag \\
    &=\sum_{i,j=1}^J \rho_i Q_{ij}\left\lbrack \exp\!\left(\frac12\log\frac{\rho_j}{\pi_j}+\xi_j-\frac12\log\frac{\rho_i}{\pi_i}-\xi_i\right)-\exp\!\left(\frac12\log\frac{\rho_j}{\pi_j}-\frac12\log\frac{\rho_i}{\pi_i}\right) \right\rbrack \notag \\
    &=\sum_{i,j=1}^J \sqrt{\rho_i\rho_j \frac{\pi_i}{\pi_j} } Q_{ij}  \left(e^{\xi_j-\xi_i}-1\right)\!. \label{eq:Markov Psi* form 1}
\end{align}
Under a condition of \emph{weak reversibility} (cf.\ \cite{Mielke2011a}),
\begin{equation}
  Q_{ij}>0 \iff Q_{ji}>0 \qquad \text{for all } i,j=1,\hdots J,
\label{eq:weak detailed balance}
\end{equation}
expression~\eqref{eq:Markov Psi* form 1} can be written as:
\begin{align}
  \Psi_\cL^*(\rho,\xi) &=\frac12 \sum_{i,j=1}^J\left\lbrack \sqrt{\rho_i\rho_j \frac{\pi_i}{\pi_j} } Q_{ij}  \left(e^{\xi_j-\xi_i}-1\right)+\sqrt{\rho_i\rho_j \frac{\pi_j}{\pi_i} } Q_{ji}  \left(e^{\xi_i-\xi_j}-1\right)\right\rbrack\! \notag \\
      &=\frac12 \sum_{i,j=1}^J \sqrt{\rho_i\rho_j Q_{ij}Q_{ji}} \bigg(\cosh\!\Big(\xi_j-\xi_i + \frac12\underbrace{\log\frac{Q_{ij}\pi_i}{Q_{ji}\pi_j}}_{=0 \text{ if d.b. holds}}\Big)-1\bigg). \label{eq:Markov Psi* cosh}
\end{align}
From this form it is immediate that $\inf_\xi\Psi_\cL^*(z,\xi)=0=\Psi_\cL^*(z,0)$ whenever the detailed balance condition~\eqref{eq:detailed balance self-adjoint} holds. By \eqref{eq:min Psi iff Psi*} and \eqref{eq:Psi iff min Psi*}, this implies that $\Psi,\Psi^*\geq0$. Therefore, under the condition of detailed balance, the linear system~\eqref{eq:Markov evolution} is described by the symmetric generalized gradient system $(\P(\{1,\hdots,J\}),\Psi_\cL,\tfrac12\E_\pi)$, as prescribed by Theorem~\ref{th:time-rev is detailed balance}.

\begin{remark}
  In general, the explicit formula for $\Psi_\cL$ can become much more involved than the formula for $\Psi_\cL^*$. For example, $\Psi_\cL^*$ is linear in $Q$, while $\Psi_\cL$ is not.
\end{remark}

To illustrate the principle of generating a generalized gradient structure out of the large deviations, we describe two examples; one for which detailed balance holds, and one for which it does not.

\begin{example}[finite-state Markov chain with detailed balance] For this example we take $J=2$; then detailed balance~\eqref{eq:detailed balance self-adjoint} always holds whenever \eqref{eq:weak detailed balance} holds. We now write $s=(s_1,-s_1)$ and $\xi=(\xi_1,-\xi_1)$. Then \eqref{eq:Markov Psi* cosh} can be written as
\begin{equation*}
  \Psi_\cL^*(\rho,\xi)=\sqrt{\rho_1\rho_2 Q_{12}Q_{21}}\big(\cosh(2\xi_1)-1\big),
\end{equation*}
and its dual becomes
\begin{equation*}
  \Psi_\cL(\rho,s)=\sqrt{\rho_1\rho_2 Q_{12}Q_{21}}\bigg(\cosh^*\!\Big(\frac{s_1}{2\sqrt{\rho_1\rho_2 Q_{12}Q_{21}}}\Big)+1\bigg),
\end{equation*}
where $\cosh^*$ is the Legendre transform of the hyperbolic cosine. By Theorems~\ref{th:L function to generalized gradient system} and \ref{th:time-rev is detailed balance}, we know that the entropy functional $\S_\cL$ has to be the relative entropy~\eqref{eq:relative entropy}, and we find
\begin{equation*}
  \cL(\rho,s)=\Psi_\cL(\rho,s) + \Psi^*_\cL\big(\rho,-\tfrac12D\E_\pi(\rho)\big) + \tfrac12D\E_\pi(\rho)\cdot s,
\end{equation*}
where the left-hand side is the large-deviation function, and the right-hand side describes a generalized gradient system. Note especially that $\Psi_\cL$ and $\Psi_\cL^*$ are indeed symmetric, but not quadratic in $s$ and $\xi$.
\end{example}

\begin{example}[finite-state Markov chain without detailed balance] We now take $J=3$ and
\begin{equation*}
  Q:=\begin{pmatrix} -1  &  1     & 0 \\
                      0  &  -1    & 1 \\
                      1  &  0     & -1
     \end{pmatrix}\!.
\end{equation*}
The invariant measure is $\pi=(1/3,1/3,1/3)$, but detailed balance does not hold. Therefore, we can no longer apply Theorem~\ref{th:time-rev is detailed balance}, so that we can not tell a priori whether the integrability condition~\eqref{eq:integrability condition} holds.

Therefore, we need to go one step back to Proposition~\ref{prop:time-symmetric L to gradient flow}.
We have the explicit formula
$ %\begin{equation*}
  \cH(\rho,\xi)\stackrel{\eqref{eq:Markov Lagrangian}}{=}\rho_1 e^{\xi_2-\xi_1} + \rho_2e^{\xi_3-\xi_2} + \rho_3 e^{\xi_1-\xi_3} -1
$ %\end{equation*}
and find  
\begin{align*}
  \cV_\cL(\rho) &:=D_s\cL(\rho,0)=\argmin \cH(\rho,\cdot)=\frac13 \begin{pmatrix} \log\big(\rho_1/\rho_3\big)\\
                                                                                  \log\big(\rho_2/\rho_1\big)\\
                                                                                  \log\big(\rho_3/\rho_2\big)
                                                                  \end{pmatrix}\!.
\end{align*}
By Proposition~\ref{prop:time-symmetric L to gradient flow}\eqref{it:locally reversible is pseudo-gradient system}, we can still use this covector field $\cV_\cL$ to construct $\Psi_\cL,\Psi_\cL^*$ such that:
\begin{equation*}
  \cL(\rho,s)=\Psi_\cL(\rho,s) + \Psi^*_\cL\big(\rho,-\cV_\cL(\rho)\big) + \cV_\cL(\rho)\cdot s.
\end{equation*}
The right-hand side of this equation however, does not represent a generalized gradient system, since the vector field $\cV_\cL(\rho)$ is non-conservative, i.e. the integrability condition~\eqref{eq:integrability condition} breaks down.
%\label{ex:finite-state Markov no detailed balance}
\end{example}

\subsection{Diffusion with drift}
%\label{subsec:diffusion}

In this application, we study the drift-diffusion equation~\eqref{eq:drift-diffusion}, and derive the Wasserstein gradient flow from \cite{Jordan1998, Otto2001}. The arguments that follow below motivate the choice of the spaces:
\begin{equation*}
  \Z:=\left\{\rho\in W^{1,1}(\R^d): \int_{\R^d}\!\rho=1 \text{ and } \int_{\R^d}\!\Big(x^2\rho + \frac{|\nabla\rho|^2}{\rho} + |F|^2\rho\Big)<\infty\right\},
\end{equation*}
with formal cotangent and tangent bundles (see \cite[Sec.~12.4]{Ambrosio2008} for the rigorous notions):
\begin{align*}
  &T^*_\rho\Z:=\Bigg\{\xi\in H^1(\rho): \int_{\R^d}\!\xi(x)\,dx=0\bigg\}, \text{ and}\\
  &T_\rho\Z:=H^{-1}(\rho)=\bigg\{s\in \mathcal D'(\R^d): \exists v\in \overline{\{\grad\phi:\phi\in C_c^\infty(\R^d)\}}^{L^2(\rho)} \text{ such that } s+\div(\rho v)=0\bigg\}\!,
\end{align*}
and with dual pairing
\begin{equation*}
  \langle \xi,s\rangle := \int_{\R^d}\!\rho v\cdot\nabla \xi
  \qquad\text{if }s+\div(\rho v) =0,
\end{equation*}
where each occurence of $s+\div(\rho v)=0$ is to be interpreted in distributional sense. Naturally, for the generator we take $\Q:\Dom\Q\to C_b(\R^d)$, with $\Dom\Q\supset C_b^2(\R^d)$ and
\begin{equation}
\label{eq:diffusion generator}
  (\Q\phi)(x):=\lapl\phi(x)-F(x)\cdot\grad\phi(x).
\end{equation}
Wherever needed, we tacitly assume sufficient regularity on covector functions $\xi\in H^1(\rho)$ such that all derivatives exist in a weak sense.

\paragraph{Particle system and large deviations.}
We now take a system of independent processes $X_1(t),X_2(t),\hdots$ in $\R^d$, each with generator $\Q$ as described above. The convergence of the empirical process~\eqref{eq:empirical process} and its corresponding large deviations are well-known results (see for example~\cite{Dawson1987}). By formula~\eqref{eq:H from Feng-Kurtz},
\begin{equation}
\label{eq:diffusion H-function}
  \cH(\rho,\xi):=\int_{\R^d}\! e^{-\xi} \big(Qe^\xi\big)\, \rho =\int_{\R^d}\!\left(\lapl\xi+|\nabla\xi|^2-F\cdot\grad\xi\right) \rho.
\end{equation}
The corresponding large-deviation L-function is found through
\begin{align}
  \cL(\rho,s) &=\sup_{\xi\in T^*_\rho\Z} \langle \xi,s\rangle - \cH(\rho,\xi) \notag \\
              &=\sup_{\xi\in T^*_\rho\Z} \langle \xi,s-\lapl\rho-\div(\rho F) \rangle -\int_{\R^d}\!|\nabla\xi|^2\,\rho \notag \\
              &=: \frac14\|s-\lapl\rho-\div(\rho F)\|_{H^{-1}(\rho)}^2. \label{eq:diffusion L-function}
\end{align}
Indeed by \cite[Th~4.5]{Dawson1987}, the empirical process $\rho^{(n)}$ satisfies the large-deviation principle~\eqref{eq:path ldp} in the space of continuous functions with values in $\P(\R^d)$, equipped with the narrow topology, with L-function~\eqref{eq:diffusion L-function}.

\paragraph{Towards gradient flows.} Again, following the construction of Proposition~\ref{prop:time-symmetric L to gradient flow}, we need to find $\cV_\cL(\rho) :=D_s\cL(\rho,0)=\argmin \cH(\rho,\cdot)$. To this aim, we fix a $\rho\in\Z$ and set, for all $\xi\in T_\rho^*\Z$, for now neglecting the dependence of $\cV_\cL$ on $\rho$,
\begin{equation*}
  0 =\langle D_\xi \cH(\rho,\cV_\cL),\xi\rangle=\int_{\R^d}\!(\lapl \xi + 2\grad\xi\cdot\grad\cV_\cL - F\grad\xi)\,\rho,
\end{equation*}
which implies
\begin{equation}
\label{eq:diffusion Lax-Milgram}
  \int_{\R^d}\!\grad\xi\cdot\grad\cV_\cL\,\rho=\frac12\int_{\R^d}\!\grad\xi\cdot\Big(\frac{\grad\rho}{\rho} + F\Big)\,\rho.
\end{equation}
The left-hand side is an inner product on $H^1(\rho)$; the right-hand side is a linear functional in $\xi\in H^1(\rho)$, bounded by definition of the space $\Z$. Therefore, Riesz's Theorem gives us the unique existence of $\cV_\cL(\rho)\in T^*_\rho\Z$.

With this covector field at hand, we use \eqref{eq:Psi* from H} to construct
\begin{align*}
  \Psi^*_\cL(\rho,\xi)  &:= \cH\big(\rho,\cV_\cL(\rho)+\xi\big) - \cH\big(\rho,\cV_\cL(\rho)\big)\\
                        &\!\!\stackrel{\eqref{eq:diffusion H-function}}{=} \int_{\R^d}\!\left(\lapl\xi+|\nabla\xi|^2+2\grad\xi\cdot\grad\cV_\cL(\rho)-F\cdot\grad\xi\right) \rho\\
                        &\!\!\stackrel{\eqref{eq:diffusion Lax-Milgram}}{=} \int_{\R^d}\!|\nabla\xi|^2\,\rho,
\intertext{and}
  \Psi_\cL(\rho,s) &:= \sup_{\xi\in T^*_\rho\Z}\, \langle \xi,s\rangle - \int_{\R^d}\!|\nabla\xi|^2\,\rho = \frac14\|s\|_{H^{-1}(\rho)}^2.
\end{align*}
Up to now, we have not assumed anything about the force field $F$. We see that even in the most general case, the construction yields dissipations potentials $\Psi_\cL,\Psi_\cL^*$ that are symmetric (and even quadratic), and independent of $F$. In general however, the integrability condition $\cV_\cL(\rho)=D\S(\rho)$ is likely to fail, and the best we can do is to split $\cL$ into
\begin{equation*}
  \cL(\rho,s)=\frac14\|s\|_{H^{-1}(\rho)}^2 + \| {-}\cV_\cL(\rho)\|^2_{H^1(\rho)} + \langle \cV_\cL(\rho),s\rangle,
\end{equation*}
where the left-hand side is the large-deviation L-function~\eqref{eq:diffusion L-function}, but the right-hand side can not be interpreted as a generalized gradient flow.

By contrast, if we assume that $F=\grad P$ for some potential $P$, then $\pi(dx) = e^{-P(x)}\,dx$ is an (locally finite) invariant measure for \eqref{eq:diffusion generator}, and detailed balance~\eqref{eq:detailed balance self-adjoint} is always satisfied for this $\pi$. Therefore, by Theorem~\ref{th:time-rev is detailed balance}, the time-symmetry condition~\eqref{eq:time-symmetric L} and the integrability condition $\cV_\cL(\rho)=D\S_\cL(\rho)$ both hold, with entropy functional
\begin{equation*}
  \S_\cL(\rho):=\tfrac12\E_\pi(\rho)=\frac12\int_{\R^d}\!\rho(x)\log\frac{\rho(x)}{\pi(x)}\,dx=\frac12\int_{\R^d}\!\rho(x)\log\rho(x)\,dx + \frac12\int_{\R^d}\!P(x)\rho(x)\,dx.
\end{equation*}
Note that the HWI inequality \cite[Cor.~20.13]{Villani2009} assures that $\S_\cL$ is indeed finite on $\Z$. Putting the parts together, we find:
\begin{equation*}
  \cL(\rho,s)=\frac14\|s\|_{H^{-1}(\rho)}^2 + \|D\S_\cL(\rho)\|^2_{H^1(\rho)} + \langle D\S_\cL(\rho),s\rangle,
\end{equation*}
where the right-hand side is the well-known quadratic Wasserstein-entropy gradient system.

\section{Conclusion and discussion}
\label{sec:discussion}

In the introduction we asked the question ``Does there exist a general connection between large-deviation rate functionals and generalized gradient flows?'' The results of this paper give an affirmative answer to this question, that we split into steps. 

\textit{Step 1: Careful definition.} What the `connection' above means can be interpreted in multiple ways. The most restricted interpretation would be \emph{find $\Psi$, $\Psi^*$, and $\S$ such that }
\begin{equation}
\label{interpretation1}
\cL(z,\dot z) = 0 \quad\Longleftrightarrow \quad \Psi(z,\dot z) + \Psi^*(z,-D\S(z)) + \langle D\S(z),\dot z\rangle=0.
\end{equation}
This interpretation leaves much room: given \emph{any} function $\S$, a function $\Psi$ can be chosen such that $D\Psi^*(z,-D\S(z))=\cA_\cL(z)$, implying~\eqref{interpretation1}. In a Hilbert space an example would be $\Psi^*(z,\xi) := \frac12 \|\xi+D\S(z)-\cA(z)\|^2$.

Instead we require more, namely
\begin{equation}
\label{interpretation2}
\cL(z,\dot z) = \Psi(z,\dot z) + \Psi^*(z,-D\S(z)) + \langle D\S(z),\dot z\rangle.
\end{equation}
This property formulates a relationship between not only the zero values of the left- and right-hand sides, but all values. We comment on the interpretation of the other values below. 

However, this requirement is still too weak to be meaningful. Lemma~\ref{lem:L=Psi Psi*} shows the slightly surprising fact that it is still possible, for \emph{any} choice of $\S$, to construct a pair $\Psi$, $\Psi^*$ such that~\eqref{interpretation2} is satisfied. Therefore even~\eqref{interpretation2} does not carry enough information. 

To improve the situation we require a further property, which is that $\Psi$ and $\Psi^*$ are minimized at zero, which is equivalent to $\Psi$ and $\Psi^*$ being non-negative. This additional condition is inspired by the fact that it makes $\S$ into a Lyapunov function and that stationary points of $\S$ are also stationary points of the evolution. It is with this additional condition that we call $(\Z,\S,\Psi)$ a generalized gradient system, and this is the connection that we seek.

\textit{Step 2: Abstract characterization.}
Now that we have defined what we are looking for, Theorem~\ref{th:L function to generalized gradient system} gives a clear characterization of the possibility of finding it. Given an L-function $\cL$, a corresponding generalized gradient system exists \emph{if and only if }
\begin{equation}
\label{DsL-is-a-derivative}
D_s\cL(z,0) \text{ is a derivative (of a functional $\overline\S$, say)}.
\end{equation}
In that case there is \emph{exactly one} system $(\Z,\S,\Psi)$ satisfying~\eqref{interpretation2} (up to addition of constants in $\S$), and $\S= \overline \S + \text{constant}$. 

At this level it is also possible to characterize when $\Psi$ and $\Psi^*$ are symmetric: if $\cL$ satisfies the relation $\cL(z,s) - \cL(z,-s) = 2\langle D\overline\S(z),s\rangle$ for some functional $\overline\S$, then~\eqref{DsL-is-a-derivative} is automatically fulfilled, and in addition $\Psi$ and $\Psi^*$ are symmetric.

\textit{Step 3: Application to empirical processes.} 
We next specialize to the case when $\cL$ is the large-deviation rate functional of an empirical measure of i.i.d.\ Markov processes. Theorem~\ref{th:time-rev is detailed balance} states that the existence of a generalized gradient system \emph{is equivalent to the property of detailed balance} of the underlying processes. In this case the unique functional~$\S$ is one-half of the relative entropy with respect to the invariant measure. This property is well known in the probability community, in an informal way, but we are not aware of a proof at this level of abstraction. In Section~\ref{sec:applications} we illustrate these abstract concepts on two examples. 

Summarizing, with these results we identify a class of L-functions that have corresponding generalized gradient systems, which coincides with the class of detailed-balance systems in the case of empirical measures of i.i.d.\ Markov processes. Let us continue with a few remarks and questions.

\bigskip

\textit{Interpretation of non-zero  values of $\cL$.}
Although the deterministic evolution described by $\cL(z,\dot z) = 0$ depends only on the level set at value zero of $\cL$, other values of $\cL$ are relevant when $\cL$ is the rate function of a large-deviation principle: $\cL(z,\dot z)$ can be interpreted as a `cost' to deviate from the optimal value $\dot z = \cA(z)$. Relation~\eqref{eq:L=Psi Psi*} shows that this cost is simultaneously characterized in terms of the components of the generalized gradient flow.

If $\cL$ is interpreted as a cost function, then \eqref{interpretation2} shows that this cost consists of two parts: 
\begin{itemize}
  \item the cost not to move at all rather than in the optimal direction is given by $\cL(z,0)-\cL\big(z,\cA(z)\big)=\cL(z,0)=\Psi^*\big(z,-D\S_\cL(z)\big)$;
  \item the net cost to move in direction $s$ rather than not to move at all is given by $\cL(z,s)-\cL(z,0)=\Psi(z,s)+\langle D\S_\cL(z),s\rangle$. %This nett cost, in turn, consists of the cost $\Psi(z,s)$ and the gain $-\langle D\S_\cL(z),s\rangle$.
\end{itemize}
It is because of this splitting that the integrability condition needs to checked for the tangent $s=0$, although it yields a gradient system for the tangent $s=\cA_\cL(\rho)$.

\textit{Deviations from detailed balance.}
In the case of empirical measures \emph{without} detailed balance, certain parts of the properties above remain. 
The existence of a `generalized covector system' $(\Z,\Psi,\cV)$ with non-negative $\Psi$ and $\Psi^*$ follows under similar conditions, as we show in Lemma~\ref{lem:L=Psi Psi*} and Proposition~\ref{prop:time-symmetric L to gradient flow}. Of course such systems do not identify a functional, and therefore the question whether the system is `driven' by some functional $\S$ remains open. 

Interestingly, the relative entropy with respect to any invariant measure $\pi$ is always a Lyapunov function, as is even every convex functional of the form $\rho \mapsto \int f(d\rho/d\pi)\, d\pi$. Therefore there is no shortage of Lyapunov functionals; but at this stage we do not have a `canonical' way of coupling any of these to a given function $\cL$.

In specific cases one can say more. In related work~\cite{Duong2013} we showed that the large-deviation rate function $\cL$ for the empirical measure of inertial particles (even with interaction) can be written as
\[
\cL(z,\dot z) = \Psi(z,\dot z - \mathscr{H}\!z) + \Psi^*(z,-D\S(z)) + \langle D\S(z),\dot z\rangle,
\]
where $\mathscr H\!z$ is the non-detailed-balance part of the limiting evolution resulting from inertia, and $\S$ is a  relative entropy. Indeed this functional $\cL$ does not satisfy the integrability condition~\eqref{DsL-is-a-derivative}. What forms of $\cL$ this example suggests us to expect in more general situations is still unclear.

\medskip
\textit{Multiple gradient-flow structures for finite-state Markov chains.}
One of the original motivations was to understand the origin of the 
gradient structure for time-continuous Markov chains with detailed
balance found
independently by Maas \cite{Maas2011}, Chow et al \cite{Chow2012},
and \cite{Mielke2011a,Mielke2012a}. This gradient structure is a
true gradient structure for the relative entropy in the sense
that the dissipation potential is quadratic. However, it turned out
that the LDP produces a generalized gradient structure. In fact, these
two gradient structures are special cases of a general family where
$\Psi^*$ is given in terms of  general scalar dissipation functions
$\psi_{ij}$ in the form 
\[
\Psi^*(\rho,\xi) = \sum_{i,j=1}^J  L_{ij}(\rho)\psi_{ij}(\xi_j-\xi_i) 
\quad \text{with } L_{ij}(\rho)=\pi_i Q_{ij}\dfrac{\frac{\rho_j}{\pi_j} -
  \frac{\rho_i}{\pi_i}}{\psi'_{ij}\big(\log\frac{\rho_j}{\pi_j} -
  \log \frac{\rho_i}{\pi_i} \big)}  \text{ for }i\neq j,
\]
and $L_{jj} \equiv 0$. Here $L_{ij}(\rho)$ is always non-negative
since $\psi_{ij}$ is strictly convex. The case $\psi_{ij}(\zeta)=\frac12\zeta^2$
leads to the above-mentioned gradient structure, while
$\psi_{ij}(\zeta)= \cosh \zeta -1 $ generates the generalized gradient
structure obtained from the LDP. 
It remains an open question to understand the role of these other gradient
structures and their relation with stochastic
processes.

Another interesting challenge is the stochastic origin of the
gradient structure for general nonlinear reaction systems or even
reaction-diffusion systems as discussed in \cite{Mielke2011a}. There
again a nonlinear version of the detailed balance condition (also
called Wegscheider condition for reaction systems) is crucial
for the existence of the gradient structure with the relative entropy
as the driving functional.   
\EEE

\appendix

\section{Rigorous proof of the large-deviation principle}

In this appendix we prove the large-deviation principle~\eqref{eq:path ldp} for finite-state continuous-time Markov chains (the formal calculation can also be found in \cite[Sec.~4.3]{Wang2012}). For this result we will need the space $BV(0,T;\P({\{1,\hdots,J\}}))$, defined as a subspace of the functions of bounded variation with values in $\R^J$, equipped with the total-variation norm, 
\begin{align*}
  \|\rho\|_{TV} %&:= \sup \left \{\sum_{j=1}^m |\rho_{t_j} - \rho_{t_{j-1}}|_{\R^J}: 0\leq t_0 < t_1 < \dots \leq t_m\leq T\right\}\\
                := \sup\left\{ \int_0^T\!\dot\phi_t\cdot\rho_t\,dt: \phi\in C^1([0,T];\R^J), \|\phi\|_{\infty}\leq1\right\}\!.
\end{align*}
The space $BV(0,T;\R^J)$ can be interpreted as the dual of $C^1([0,T];\R^J)$, if the dual pairing $\int_0^T\dot\phi_t\rho_t\,dt$ is used. We then define $\Theta$ to be the space $BV\big(0,T;\P({\{1,\hdots,J\}})\big)$, endowed with the weak-$*$ topology.

\begin{theorem}
\label{th:Markov ldp}
Consider the Markov processes $X_1(t),X_2(t),\hdots$ and the corresponding counting process $\rho^{(n)}:t\mapsto\sum_{k=1}^n \Indicator_{X_k(t)}$ as defined in Section~\ref{subsec:Markov stochsys}. Assume that the initial states $\rho^{(n)}_0$ are deterministic and converge in $\R^J$ to the invariant measure $\mu$. Then~$\rho^{(n)}$ satisfies a large-deviation principle in the space $\Theta$ with good rate function
\[
I(\rho) := \begin{cases}
\int_0^T \cL(\rho_t,\dot \rho_t)\, dt &\text{if $\rho$ is absolutely continuous,}\\
+\infty &\text{otherwise},
\end{cases}
\]
where $\cL$ is defined in~\eqref{eq:Markov Lagrangian}. 
\end{theorem}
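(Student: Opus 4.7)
The plan is to use Cram\'er's tilting method, exploiting the fact that $\rho^{(n)}$ is itself a pure-jump Markov process in $\mathcal P_n:=\mathcal P(\{1,\hdots,J\})\cap \tfrac1n\Z^J$ with generator
\[
(\mathcal G^{(n)}F)(\rho)=\sum_{i\neq j} n\rho_iQ_{ij}\bigl[F(\rho+\tfrac1n(e_j-e_i))-F(\rho)\bigr].
\]
Since $\mathcal P(\{1,\hdots,J\})$ is compact in $\R^J$, the only nontrivial tightness issue is control of the total variation of $t\mapsto\rho^{(n)}_t$. The total number of particle jumps in $[0,T]$ is stochastically dominated by a Poisson variable with mean $n T\max_{i\neq j}|Q_{ij}|$, and $\|\rho^{(n)}\|_{TV}$ is bounded by twice that count divided by $n$; standard Poisson deviation estimates then give exponential tightness of $\{\rho^{(n)}\}$ in the space $\Theta$.

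For the upper bound I would use the exponential martingales built from smooth test paths. For each $\phi\in C^1([0,T];\R^J)$, the direct computation $e^{-n\phi\cdot\rho}(\mathcal G^{(n)}e^{n\phi\cdot\rho})(\rho)=n\cH(\rho,\phi)$ shows that the ``nonlinear generator'' is $n$-independent, and an Itô/Dynkin argument yields the mean-one positive martingale
\[
M^{(n),\phi}_t:=\exp\Bigl(n\phi_t\cdot\rho^{(n)}_t-n\phi_0\cdot\rho^{(n)}_0-n\int_0^t\bigl[\dot\phi_s\cdot\rho^{(n)}_s+\cH(\rho^{(n)}_s,\phi_s)\bigr]\,ds\Bigr).
\]
Exponential Chebyshev applied to $\Expectation M^{(n),\phi}_T=1$ gives, for every closed $K\subset \Theta$,
\[
\tfrac1n\log\Prob(\rho^{(n)}\in K)\le-\inf_{\rho\in K}J_\phi(\rho),\qquad J_\phi(\rho):=\phi_T\cdot\rho_T-\phi_0\cdot\rho_0-\int_0^T\!\bigl[\dot\phi_s\cdot\rho_s+\cH(\rho_s,\phi_s)\bigr]ds.
\]
Supremizing $J_\phi$ over $\phi$ produces the rate function: for absolutely continuous $\rho$, an integration by parts rewrites $J_\phi(\rho)=\int_0^T[\phi_s\cdot\dot\rho_s-\cH(\rho_s,\phi_s)]ds$, and a Rockafellar-type measurable selection identifies $\sup_\phi J_\phi(\rho)=\int_0^T\cL(\rho_s,\dot\rho_s)ds$; for a BV curve with a nontrivial singular component, choosing $\phi$ concentrated on the support of the singular part makes the supremum blow up, recovering the value $+\infty$.

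For the lower bound, I would, given an AC target path $\rho^*$ with finite rate, solve $\dot\rho^*_s=D_\xi\cH(\rho^*_s,\phi^*_s)$ for a measurable tilt $\phi^*_s$ and use $M^{(n),\phi^*}$ to define a tilted law $\Prob^*_n$, under which the empirical process has drift $\dot\rho^*_s$ and therefore concentrates near $\rho^*$. Changing measure back, $-\tfrac1n\log\frac{d\Prob_n}{d\Prob^*_n}$ converges to $\int_0^T\cL(\rho^*_s,\dot\rho^*_s)ds$ by a Varadhan-type computation, which gives the matching large-deviation lower bound on $\Theta$-neighborhoods of $\rho^*$; a density argument extends this from smooth strictly positive $\rho^*$ to all AC paths of finite rate. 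Goodness of $I$ then follows from its lower semicontinuity together with the exponential tightness above. The main technical obstacles I anticipate are (a) constructing the optimal tilt $\phi^*$ up to the boundary $\{\rho_i=0\}$, where $\cH$ degenerates and the tilted chain may drift out of $\mathcal P_n$---this will require truncating/regularizing $\rho^*$ away from the boundary and passing to the limit; and (b) the precise duality identification $\sup_\phi J_\phi(\rho)=\int_0^T\cL(\rho_s,\dot\rho_s)ds$ on the full BV space with the weak-$*$ topology, which needs a careful measurable-selection argument for the time-dependent $\phi$ and a density argument to justify restricting to $C^1$ test functions.
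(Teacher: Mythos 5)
Your overall strategy is the paper's own: exponential tightness via counting jumps and a Poisson/Chernoff bound, a tilted (Girsanov) change of measure built from the nonlinear generator $\cH$ for both bounds, and a duality lemma identifying $\sup_\phi J_\phi(\rho)$ with $\int_0^T\cL(\rho_t,\dot\rho_t)\,dt$ (including the blow-up on the singular part of a BV path). The only packaging difference is that you work with the exponential martingale of the empirical process' generator $\mathcal G^{(n)}$ directly, whereas the paper factors the Radon--Nikodym derivative particle-by-particle using independence and the single-particle Girsanov formula of Kipnis--Landim; these are equivalent, and your observation that $\tfrac1n e^{-n\phi\cdot\rho}\mathcal G^{(n)}e^{n\phi\cdot\rho}=\cH(\rho,\phi)$ is $n$-independent is exactly what makes the scheme work. (Minor slip: the dominating Poisson rate per particle should be $\gamma=\max_i\sum_{j\neq i}Q_{ij}$, not $\max_{i\neq j}|Q_{ij}|$; harmless.)

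There is one genuine gap, in the upper bound. Exponential Chebyshev with a \emph{fixed} $\phi$ gives $\limsup_n\tfrac1n\log\Prob(\rho^{(n)}\in K)\le-\inf_{K}J_\phi$, and taking the supremum over $\phi$ yields only $-\sup_\phi\inf_K J_\phi$, which is in general strictly weaker than the desired $-\inf_K\sup_\phi J_\phi=-\inf_K I$. ``Supremizing $J_\phi$ over $\phi$'' does not by itself produce the rate function; you need to interchange the sup and the inf. The standard repair --- and the step the paper spends the second half of its upper-bound proof on --- is to restrict to compact sets (justified by your exponential tightness), cover a compact $C$ by finitely many small sets $C_k$ on each of which a near-optimal $\phi_k$ can be chosen by continuity of $\rho\mapsto G(\rho,\phi_k)$, and then use $\Prob(\rho^{(n)}\in C)\le\sum_k\Prob(\rho^{(n)}\in C_k)$ to reduce to the fixed-$\phi$ bound on each piece. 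You flag the boundary degeneracy and the measurable-selection issue as obstacles but not this one, and as written your upper bound would not close. The rest of the plan (lower bound via the tilt solving $\dot\rho^*=D_\xi\cH(\rho^*,\phi^*)$, law of large numbers under the tilted law, and goodness from lower semicontinuity plus tightness) matches the paper and is sound.
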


\bigskip

The proof consists of three lemmas. Lemmas~\ref{lemma:lowerbound} and \ref{lemma:upperbound} together prove the `weak' large-deviation principle. The exponential tightness, proven in Lemma~\ref{lemma:exptight}, then guarantees that the (`strong') large-deviation principle is also satisfied~\cite[Lem.~1.2.18]{Dembo1998}.

\begin{lemma}\textbf{(Exponential tightness)}
\label{lemma:exptight}
For all $\alpha>0$ there exists a compact set $K\subset \Theta$ such that 
\[
\Prob(\rho^{(n)}\not\in K) \leq e^{-\alpha n}.
\]
\end{lemma}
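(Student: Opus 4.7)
The plan is to exploit the fact that the weak-$*$ topology on $BV(0,T;\R^J)$ makes total-variation balls compact (Banach-Alaoglu, applied to the separable predual $C^1([0,T];\R^J)$), together with the fact that $\rho^{(n)}_t$ always lives in the compact simplex $\P(\{1,\dots,J\})$. Thus a natural candidate for the compact set $K$ is
\[
  K_M := \bigl\{\rho\in\Theta : \|\rho\|_{TV}\le M\bigr\},
\]
and the task reduces to choosing $M=M(\alpha)$ so large that $\Prob\bigl(\|\rho^{(n)}\|_{TV}>M\bigr)\le e^{-\alpha n}$.

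The next step is to relate the total variation of $\rho^{(n)}$ to the jump count of the underlying particles. Each jump of a single particle from state~$i$ to state~$j$ changes $\rho^{(n)}$ by $(e_j-e_i)/n$, which contributes $2/n$ to the total variation. Writing $N^{(n)}_T$ for the total number of jumps of all $n$ particles on $[0,T]$, I would show $\|\rho^{(n)}\|_{TV}\le 2N^{(n)}_T/n$ (testing against $\phi$ with $\|\phi\|_\infty\le1$ and using the jump representation of $\rho^{(n)}$). Since each particle's waiting time in state $i$ is exponential with rate $-Q_{ii}\le\lambda:=\max_i(-Q_{ii})$, the process $N^{(n)}_T$ is stochastically dominated by a Poisson random variable $P_n$ with parameter $n\lambda T$.

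Then the classical Chernoff/Cram{\'e}r bound for a Poisson variable gives, for any $M>2\lambda T$,
\[
  \Prob\bigl(\|\rho^{(n)}\|_{TV}>M\bigr)
  \;\le\;\Prob\bigl(P_n>nM/2\bigr)
  \;\le\;\exp\!\bigl(-n\,J(M/2)\bigr),
\]
where $J(x)=x\log(x/(\lambda T))-x+\lambda T\to\infty$ as $x\to\infty$. Choosing $M$ large enough that $J(M/2)\ge\alpha$ yields the required estimate, and Banach-Alaoglu plus closedness of the pointwise simplex constraint under weak-$*$ limits make $K_M$ compact in $\Theta$.

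The main obstacle I anticipate is the compactness statement for $K_M$ in the stated weak-$*$ topology: one must check that the separability of $C^1([0,T];\R^J)$ guarantees metrizability on norm-bounded subsets so that Banach-Alaoglu applies, and that the pointwise constraint $\rho_t\in\P(\{1,\dots,J\})$ is preserved under weak-$*$ limits (this follows since for any continuous $\phi$ depending only on $t$, the functional $\rho\mapsto\int_0^T\dot\phi_t\cdot\rho_t\,dt$ evaluates to boundary and interior values consistent with membership in the simplex). The estimate $\|\rho^{(n)}\|_{TV}\le 2N^{(n)}_T/n$ and the Poisson domination are routine; the real care goes into handling the topological setup.
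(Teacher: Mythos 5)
Your proposal is correct and follows essentially the same route as the paper: bound $\|\rho^{(n)}\|_{TV}$ by $2/n$ times the total jump count, dominate the jump count by a Poisson variable with rate $\gamma T$ per particle (your $\lambda=\max_i(-Q_{ii})$ equals the paper's $\gamma=\max_i\sum_{j\neq i}Q_{ij}$), apply a Chernoff bound, and take $K$ to be a weak-$*$ compact total-variation ball via Banach--Alaoglu. The only cosmetic differences are that you optimize the Chernoff exponent (obtaining the Poisson rate function $J$) where the paper simply uses the tilt parameter $1$, and that the paper takes the weak-$*$ closure of the TV-ball explicitly; both are immaterial.
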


\begin{lemma}\textbf{(Lower bound)}
\label{lemma:lowerbound}
For all open sets $O\subset \Theta$, 
\[
\liminf_{n\to\infty} \frac1n \log \Prob(\rho^{(n)}\in O) \geq -\inf_O I.
\]
\end{lemma}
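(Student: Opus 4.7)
The plan is to use Cram\'er's classical change-of-measure (tilting) argument. Fix $\rho\in O$ with $I(\rho)<\infty$ (otherwise the bound is trivial); since the initial condition is deterministic with $\rho^{(n)}_0\to\mu$, we may assume $\rho_0=\mu$. The first step is an approximation: construct a regularized curve $\tilde\rho\in O$ satisfying $\tilde\rho^i_t\ge\delta>0$ for all $i,t$, $\tilde\rho\in C^1([0,T];\R^J)$, and $I(\tilde\rho)\le I(\rho)+\e$. A natural construction mixes $\rho$ with the invariant measure $\mu$ (to push the path uniformly into the interior of the simplex) and then mollifies in time; convexity of $\cL(\rho,\cdot)$ together with the explicit form in~\eqref{eq:Markov Lagrangian} supplies the rate comparison. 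I expect this regularization to be the main technical obstacle, since $\cL(\rho_t,\dot\rho_t)$ can blow up when components of $\rho_t$ vanish while $\dot\rho_t$ still forces transitions into them.

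For the regularized $\tilde\rho$, at each $t$ I would pick a covector $\xi_t\in\R^J$ attaining $\cL(\tilde\rho_t,\dot{\tilde\rho}_t)=\xi_t\cdot\dot{\tilde\rho}_t-\cH(\tilde\rho_t,\xi_t)$. Because $\tilde\rho_t$ is bounded away from zero and $\tilde\rho\in C^1$, $t\mapsto\xi_t$ is well-defined and uniformly bounded. The stationarity condition $\dot{\tilde\rho}_t=D_\xi\cH(\tilde\rho_t,\xi_t)$ is precisely the forward Kolmogorov equation for the time-inhomogeneous generator with off-diagonal rates $\tilde Q_{ij}(t):=Q_{ij}\,e^{\xi^j_t-\xi^i_t}$. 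Let $\bP^\xi$ denote the law of $n$ independent particles driven by $\tilde Q(\cdot)$ with the same deterministic initial condition $\rho^{(n)}_0$, and $\Expectation^\xi$ the corresponding expectation. A standard law of large numbers for independent time-inhomogeneous chains on a finite state space then yields $\rho^{(n)}\to\tilde\rho$ in $\Theta$ in $\bP^\xi$-probability; in particular $\bP^\xi(\rho^{(n)}\in O)\to 1$.

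For the Radon--Nikodym derivative I would use the exponential martingale
\begin{equation*}
M^{(n)}_t:=\exp\!\Bigl(n\xi_t\cdot\rho^{(n)}_t-n\xi_0\cdot\rho^{(n)}_0-n\!\int_0^t\!\dot\xi_s\cdot\rho^{(n)}_s\,ds-n\!\int_0^t\!\cH(\rho^{(n)}_s,\xi_s)\,ds\Bigr),
\end{equation*}
which is a mean-one martingale under the original law $\bP$ (as a product of the one-particle exponential martingales built from $e^{\xi_t(\cdot)}$), so $d\bP^\xi=M^{(n)}_T\,d\bP$. Integration by parts against the pure-jump path $\rho^{(n)}$ converts the first three terms in the exponent into $n\int_0^T\xi_s\cdot d\rho^{(n)}_s$, giving $M^{(n)}_T=e^{nR^{(n)}_T}$ with
\begin{equation*}
R^{(n)}_T:=\int_0^T\xi_s\cdot d\rho^{(n)}_s-\int_0^T\cH(\rho^{(n)}_s,\xi_s)\,ds.
\end{equation*}

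Writing $\Prob(\rho^{(n)}\in O)=\Expectation^\xi\!\bigl[e^{-nR^{(n)}_T}\Indicator_{\{\rho^{(n)}\in O\}}\bigr]$ and applying Jensen's inequality to the conditional law on $\{\rho^{(n)}\in O\}$,
\begin{equation*}
\frac1n\log\Prob(\rho^{(n)}\in O)\ \ge\ -\Expectation^\xi\!\bigl[R^{(n)}_T\bigm|\rho^{(n)}\in O\bigr]+\frac1n\log\bP^\xi(\rho^{(n)}\in O).
\end{equation*}
Boundedness of $\xi$ and uniform $BV$-control of $\rho^{(n)}$ under $\bP^\xi$ reduce the convergence $R^{(n)}_T\to\int_0^T[\xi_s\cdot\dot{\tilde\rho}_s-\cH(\tilde\rho_s,\xi_s)]\,ds=I(\tilde\rho)$ in $L^1(\bP^\xi)$ to a routine LLN computation, and $\bP^\xi(\rho^{(n)}\in O)\to 1$. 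Hence $\liminf_{n\to\infty}\tfrac1n\log\Prob(\rho^{(n)}\in O)\ge-I(\tilde\rho)\ge-I(\rho)-\e$, and letting $\e\downarrow 0$ finishes the plan.
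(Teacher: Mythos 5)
Your proposal is correct and follows essentially the same route as the paper: tilt the law by the exponential (Girsanov) martingale built from the optimizing $\xi$, so that the tilted rates $Q_{ij}e^{\xi_t(j)-\xi_t(i)}$ make the target path typical, apply the law of large numbers under the tilted measure, and control the Radon--Nikodym derivative $e^{-nG(\cdot,\xi)}$. The only organizational differences are that the paper restricts to a small weak-$*$ ball $B(\rho,\e)\subset O$, bounds the density by its essential infimum there and uses continuity of $\rho\mapsto G(\rho,\xi)$ in place of your Jensen/$L^1$-convergence step, and leaves the smooth-approximation of the near-minimizer (which you rightly flag as the delicate point) implicit.
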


\begin{lemma}\textbf{(Upper bound)}
\label{lemma:upperbound}
For all compact sets $C\subset \Theta$, 
\[
\limsup_{n\to\infty} \frac1n \log \Prob(\rho^{(n)}\in C) \leq -\inf_C I.
\]
\end{lemma}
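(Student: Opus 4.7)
The plan is to prove the upper bound via the classical Cram\'er-style exponential-tilting argument. For each smooth test function $\xi\in C^1([0,T];\R^J)$ and each particle $X_k$, Dynkin's formula yields the exponential martingale
\[
  t \mapsto \exp\Bigl(\xi_t(X_k(t)) - \xi_0(X_k(0)) - \int_0^t \bigl[\dot\xi_s(X_k(s)) + e^{-\xi_s(X_k(s))}(Qe^{\xi_s})(X_k(s))\bigr]\, ds\Bigr)
\]
of mean $1$. Multiplying these $n$ independent martingales together and using that $\sum_i \rho_i\,e^{-\xi_i}(Qe^\xi)_i = \cH(\rho,\xi)$ (cf.~\eqref{eq:Markov Lagrangian}), the product equals $M^{n,\xi}_T = \exp\bigl(n\, J(\rho^{(n)},\xi)\bigr)$, where
\[
  J(\rho,\xi) := \xi_T\cdot\rho_T - \xi_0\cdot\rho_0 - \int_0^T \dot\xi_t\cdot\rho_t\, dt - \int_0^T \cH(\rho_t,\xi_t)\, dt,
\]
and $\Expectation[M^{n,\xi}_T]=1$. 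Since $M^{n,\xi}_T \ge \exp\bigl(n\inf_{\rho\in A} J(\rho,\xi)\bigr)$ on the event $\{\rho^{(n)}\in A\}$, this identity yields the Chernoff-type bound
\[
  \Prob(\rho^{(n)}\in A) \le \exp\bigl(-n\inf_{\rho\in A} J(\rho,\xi)\bigr)
\]
for every measurable $A\subset\Theta$ and every $\xi\in C^1([0,T];\R^J)$.

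Next, I would observe that $\rho \mapsto J(\rho,\xi)$ is weak-$*$ continuous on $\Theta$ for each fixed $\xi$: the integral $\int_0^T\dot\xi_t\cdot\rho_t\, dt$ is precisely the BV duality pairing, $\rho_0$ is deterministic by hypothesis, and the boundary term $\xi_T\cdot\rho_T$ can be neutralized by restricting a priori to $\xi$ with $\xi_T=0$ (this subclass still recovers the full variational content of $I$). To upgrade the one-$\xi$ bound into a bound by $-\inf_C I$, I would employ the standard finite-covering argument: for each $\rho^*\in C$ and each $\epsilon>0$, choose $\xi^*=\xi^*(\rho^*,\epsilon)$ such that $J(\rho^*,\xi^*)\ge I(\rho^*)-\epsilon \ge \inf_C I-\epsilon$; weak-$*$ continuity of $J(\cdot,\xi^*)$ then extends this bound, with $\epsilon$ replaced by $2\epsilon$, to an open neighborhood $U_{\rho^*}$ of $\rho^*$; compactness of $C$ provides a finite subcover $U_{\rho^*_1},\ldots, U_{\rho^*_K}$, and a union bound combined with $\epsilon\to 0$ delivers the desired estimate.

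The main obstacle is identifying the variational supremum, namely verifying that
\[
  \sup_{\xi\in C^1([0,T];\R^J)} J(\rho,\xi) \;=\; I(\rho) \qquad \text{for every } \rho\in\Theta.
\]
For absolutely continuous $\rho$, integration by parts rewrites $J(\rho,\xi)$ as $\int_0^T[\xi_t\cdot\dot\rho_t - \cH(\rho_t,\xi_t)]\, dt$, and the identity reduces to the pointwise Fenchel duality~\eqref{eq:Markov Lagrangian}; the analytical work is then a measurable-selection and smooth-in-$t$ approximation step promoting pointwise optimizers $\xi^\star(t)\in T_{\rho_t}^*\Z$ to an element of $C^1([0,T];\R^J)$ that nearly achieves the supremum. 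For BV curves with a non-trivial singular part, one must exhibit a sequence of test functions along which $J(\rho,\xi)\to+\infty$; the key point is that $\cH(\rho,\xi)$ depends on $\xi$ only through the discrete gradients $\xi_j-\xi_i$ and is locally bounded in $\xi$, while the singular part of $d\rho$ lies in the mass-preserving subspace $T_\rho\Z$, so a sharply concentrated $\xi$ localized near a jump of $\rho$ inflates the linear term without a matching blow-up of $\int\cH(\rho_t,\xi_t)\,dt$.
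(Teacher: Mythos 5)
Your proposal is correct and follows essentially the same route as the paper: the exponential-martingale/Chernoff bound $\Prob(\rho^{(n)}\in A)\le\exp(-n\inf_A J(\cdot,\xi))$ is exactly the paper's change-of-measure estimate via the Girsanov-type density $\tfrac1n\log\tfrac{d\bP^{(n)}_\xi}{d\bP^{(n)}}=G(\cdot,\xi)$, and your finite-covering-plus-continuity argument and the identification $\sup_\xi J(\rho,\xi)=I(\rho)$ (including the blow-up for curves with a singular part) mirror the paper's minimax step and its lemma~\eqref{eq:lemma:IG}. The only cosmetic difference is that you keep the boundary terms explicit and restrict to $\xi_T=0$, whereas the paper integrates by parts first; this changes nothing of substance.
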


We now prove these lemmas.
\begin{proof}[Proof of Lemma~\ref{lemma:exptight}, Exponential tightness]
Take an arbitrary $\alpha>0$, and define for some $M>0$, to be chosen later, the set
\[ 
K:= \overline{\{\rho\in \Theta: \|\rho\|_{TV} \leq M\}}.
\]
Here the closure is in the weak-* topology, such that $K$ is automatically compact by the Banach-Alaoglu Theorem.

To estimate $\Prob(\rho^{(n)}\in K^c)$, we first estimate
\begin{align}
\|\rho^{(n)}\|_{TV} &\leq \frac1n \sum_{k=1}^n \|\Indicator_{X_k}\|_{TV}=\frac2n\sum_{k=1}^n \Lambda_k,
%= \frac2n \sum_{k=1}^n \# \{\text{jumps of $X_k$}\} \notag\\
%&= \frac2n \#\{\text{jumps of all $X_k,k=1,\hdots,n$}\}. 
\label{eq:bound TV by jumps}
\end{align}
where $\Lambda_k$ is the number of jumps of $X_k$ in the time interval $[0,T]$. Therefore we estimate the probability of the set $K^c$ by estimating the number of jumps of all particles.

We apply a Chernoff estimate to find
\begin{align*}
\Prob\Big(\frac2n\sum_{k=1}^n\Lambda_k\geq M \Big)&= 
\Prob\Big(\exp\sum_{k=1}^n \Lambda_k \geq \exp \frac{nM}{2}\Big)\\
&\leq e^{-\frac{nM}{2}} \Expectation \Big(\exp \sum_{k=1}^n \Lambda_k\Big)
= e^{-\frac{nM}{2}} \prod_{k=1}^n \Expectation (\exp \Lambda_k),
\end{align*}
by the independence of the $\Lambda_k$. Let $\gamma := \max_{i=1,\hdots,J} \sum_{j=1,j\neq i}^J Q_{ij}$ be the maximal rate of jumping away from any site. Comparing the random variable $\Lambda_k$ with a Poisson process $L$ with rate $\gamma$, we find that
\[
\Expectation (\exp \Lambda_k)\leq \Expectation(\exp L_T) = e^{\gamma T(e-1)}.
\]
Then, continuing the calculation above, 
\[
\Prob\Big(\frac2n\sum_{k=1}^n \Lambda_k\geq M\Big)\leq
e^{-\frac{nM}{2}} \prod_{k=1}^n e^{\gamma T(e-1)}
= \exp -n\big(\tfrac{M}{2} - \gamma T(e-1)\big).
\]
By choosing $M>2\big(\alpha + \gamma T(e-1)\big)$ we have 
\[
\Prob(\rho^{(n)}\not\in K)\leq\Prob\big(\|\rho^{(n)}\|_{TV}\geq M\big) \mathop{\leq}^{\eqref{eq:bound TV by jumps}} \Prob\bigg(\sum_{k=1}^n\Lambda_k\leq \frac{nM}{\sqrt{2}}\bigg)\leq e^{-\alpha n}
\qquad \text{for all }n.
\]
This proves the assertion.
\end{proof}

For the upper and lower bound it will be useful to have a dual characterization of the proposed rate function $\int_0^T\cL$ similar to the dual characterization of $\cL$ itself. For $\rho\in \Theta$ and $\xi\in C_b([0,T];\R^J)$, define 
\[
G(\rho, \xi) := \int_0^T \Big[ \dual{\xi_t}{\dot\rho_t} - \cH(\rho_t,\xi_t)\Big]\, dt.
\]
Here and below we use the notation $\xi_t$ for the value of $\xi$ at time $t$, and $\xi_t(i)$ for the $i$-coordinate of the vector $\xi_t\in \R^J$, for $i\in {\{1,\hdots,J\}}$. Moreover, to simplify notation we write $\langle \xi,\rho\rangle:=\sum_{i=1}^J \xi(i)\rho(i)$.

\begin{lemma}
For all $\rho\in \Theta$, 
\begin{equation}
\label{eq:lemma:IG}
I(\rho) = \sup\left\{ G(\rho,\xi): \xi\in C_b([0,T];\R^J)\right\}.
\end{equation}
\end{lemma}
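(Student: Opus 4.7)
The plan is to prove the two inequalities separately. For the easy direction $I(\rho)\geq \sup_\xi G(\rho,\xi)$, first observe that since $\cH(\rho,\xi)$ depends only on the differences $\xi_j-\xi_i$ it is invariant under adding a constant vector to $\xi$, and for $s\in T_\rho\Z$ (hence $\sum_i s_i=0$) the pairing $\xi\cdot s$ is also invariant, so the duality formula in~\eqref{eq:Markov Lagrangian} can be extended to an unrestricted supremum $\cL(\rho,s)=\sup_{\xi\in\R^J}\xi\cdot s-\cH(\rho,\xi)$. If $\rho$ is absolutely continuous, then for any $\xi\in C_b([0,T];\R^J)$ the Fenchel--Young inequality gives pointwise in $t$
\[
\cL(\rho_t,\dot\rho_t)\;\geq\;\langle\xi_t,\dot\rho_t\rangle-\cH(\rho_t,\xi_t),
\]
and integrating over $[0,T]$ yields $I(\rho)\geq G(\rho,\xi)$, whence $I(\rho)\geq\sup_\xi G(\rho,\xi)$.

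For the non-trivial direction $I(\rho)\leq \sup_\xi G(\rho,\xi)$ in the absolutely continuous case, the idea is a measurable-selection plus approximation. For fixed $\epsilon>0$, for a.e.\ $t$ one can select a measurable map $t\mapsto\xi^\epsilon_t\in\R^J$ (using the Kuratowski--Ryll-Nardzewski selection theorem applied to the multifunction of $\epsilon$-maximizers; note that $\cH(\rho,\cdot)$ is smooth and strictly convex, so $\epsilon$-maximizers form measurable sets in $t$) such that
\[
\langle\xi^\epsilon_t,\dot\rho_t\rangle-\cH(\rho_t,\xi^\epsilon_t)\;\geq\;\min\bigl(\cL(\rho_t,\dot\rho_t),\,1/\epsilon\bigr)-\epsilon.
\]
Truncating $\xi^\epsilon$ in sup-norm does not affect the integrand uniformly (using the explicit exponential form of $\cH$ together with local boundedness of $\rho_t$), and Lusin's theorem plus standard mollification produces a sequence of continuous bounded $\xi^{(k)}$ with $\xi^{(k)}\to\xi^\epsilon$ in measure. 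Since $\cH$ is continuous in $\xi$ and bounded on bounded sets (locally uniformly in $\rho$), dominated convergence yields $G(\rho,\xi^{(k)})\to \int_0^T[\langle\xi^\epsilon_t,\dot\rho_t\rangle-\cH(\rho_t,\xi^\epsilon_t)]\,dt$. Sending $\epsilon\downarrow 0$ after a monotone-convergence step on the truncated integrand gives $\sup_\xi G(\rho,\xi)\geq I(\rho)$.

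It remains to cover the case where $\rho$ is not absolutely continuous, for which $I(\rho)=+\infty$ by definition and we must show $\sup_\xi G(\rho,\xi)=+\infty$. The key is that $\cH(\rho_t,\xi)$ grows only as $\sum_{ij}\rho_iQ_{ij}e^{\xi_j-\xi_i}$, which is bounded above by a constant $C$ whenever $\xi$ ranges in a bounded set; equivalently, the recession function of $\cL(\rho,\cdot)$ is $+\infty$ in every non-zero direction. Decomposing the Stieltjes measure as $d\rho=\dot\rho^{\mathrm{ac}}\,dt+d\rho^{s}$, if $d\rho^s\not\equiv 0$ then one can pick a direction $v\in\R^J$ with $\sum_i v_i=0$ such that $\int v\cdot d\rho^s\neq 0$, and take $\xi^M_t=M\phi_\delta(t)v$ where $\phi_\delta$ is a continuous cutoff concentrated on a thin neighborhood of a set of small Lebesgue measure carrying most of the singular mass. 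The contribution $\int\langle\xi^M_t,d\rho_t\rangle$ scales linearly in $M$ while $\int\cH(\rho_t,\xi^M_t)\,dt$ can be kept bounded by taking the support of $\phi_\delta$ of small enough Lebesgue measure; sending $M\to\infty$ gives $\sup_\xi G(\rho,\xi)=+\infty$.

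The main obstacle is the selection/approximation step in the second paragraph: one must produce \emph{continuous bounded} test functions $\xi$ attaining the pointwise supremum in the limit, even though the natural measurable pointwise maximizer $\xi^*_t$ can be unbounded (where $\cL(\rho_t,\dot\rho_t)$ blows up) and non-continuous (where $\rho_t$ or $\dot\rho_t$ jumps). The chain of truncation in $M$, approximation in measure via Lusin, and passage to the limit by dominated convergence must be arranged so that each step preserves the lower bound on the integrand; this is where the specific exponential form of $\cH$ and local finiteness of $\rho_t\in\P(\{1,\dots,J\})$ enter crucially.
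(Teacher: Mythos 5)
Your proposal is correct and follows exactly the same three-step structure as the paper's proof (the easy inequality via pointwise Fenchel--Young duality, the blow-up of the supremum for non-absolutely-continuous $\rho$ by concentrating a large test function near the singular mass, and the reverse inequality by measurable selection and approximation of the pointwise optimizer); the paper merely labels these steps ``standard'' without details, which you supply in a plausible and workable form. The only point to arrange carefully is the order of limits in your second paragraph: it is cleanest to take $\epsilon$-maximizers constrained to the ball $\{|\xi|\le M\}$ so that the truncated integrand is nonnegative and monotone in $M$, which makes the final monotone-convergence step immediate.
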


\begin{proof}
The proof is standard, and proceeds in three steps:
\begin{enumerate}
\item The inequality ``$\geq$" follows directly from the duality relation~\eqref{eq:Markov Lagrangian}. 
\item The implication ``$\rho$ not absolutely continuous $\Longrightarrow$ the supremum is $+\infty$'' follows from an explicit choice of subintervals in which $|\dot \rho|$ is large; by concentrating $\xi$ on those subintervals the supremum can be made as large as necessary.
\item Finally,  the inequality ``$\leq$'' follows by a standard approximation argument, in which~$\xi$ is constructed to approximate the pointwise optimum in the duality~\eqref{eq:Markov Lagrangian}.
\end{enumerate}
\end{proof}

We also need a Girsanov-type theorem that characterizes the Radon-Nikodym derivative of the original process with respect to a modified process. It is no coincidence that the functional $G$ defined above arises as in this characterization. To formulate this result we first introduce some notation.
Let $\bP$ be the law of a single process $X_k(t)$, generated by $\Q$, and for any $\xi\in C^1([0,T];\R^J)$, let $\bP_f$ be the law of a single process generated by the modified, time-dependent generator
\[
  (\Q_{\xi}(t)g)_i := \sum_{j=1}^J Q_{ij}e^{\xi_t(j)-\xi_t(i)}\big(g(j)-g(i)\big), \qquad\text{for }g\in\R^J, i\in {\{1,\hdots,J\}}.
\]
We write $\bP^{(n)}$ and $\Expectation^{(n)}$ for the law and expectation of the empirical process $\rho^{(n)}$, and $\bP^{(n)}_\xi$ and $\Expectation^{(n)}_\xi$ for the law and expectation of the modified empirical process.

\begin{lemma}\text{(A Girsanov-type characterization)}
\label{lemma:Girsanov}
For $\bP^{(n)}$-almost every $\rho$ of the form $t\mapsto\frac1n \sum_{k=1}^n \Indicator_{x_k(t)}$,
\[
\frac1n \log \frac{\mathrm d\bP^{(n)}_\xi}{\mathrm d\bP^{(n)}}(\rho) 
= G(\rho,\xi).
\]
\end{lemma}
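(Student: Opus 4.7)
The plan is to exploit the independence of the particles to reduce the identity to a single-particle Girsanov-type formula, and then translate the resulting single-particle expressions into the empirical quantities $\rho_t$. Since $X_1,\ldots,X_n$ are independent under both $\bP^{(n)}$ and $\bP^{(n)}_\xi$, the $n$-particle Radon-Nikodym derivative factorizes,
\[
\frac{d\bP^{(n)}_\xi}{d\bP^{(n)}}(X_1,\ldots,X_n) = \prod_{k=1}^n \frac{d\bP_\xi}{d\bP}(X_k),
\]
so it is enough to identify the single-particle density.

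For a single Markov jump process $X$ with generator $Q$ and time-dependent tilted generator $\Q_\xi(t)$, the classical Girsanov formula for pure jump processes, derived either by explicit computation on the finite-dimensional skeleton parameterized by holding times and jump destinations, or via the Doob--Meyer decomposition of the counting process of jumps, reads
\[
\frac{d\bP_\xi}{d\bP}(X) = \exp\!\left(\sum_{\tau}\bigl[\xi_\tau(X(\tau))-\xi_\tau(X(\tau^-))\bigr] - \int_0^T\!\sum_{j=1}^J Q_{X(t),j}\bigl(e^{\xi_t(j)-\xi_t(X(t))}-1\bigr)\,dt\right),
\]
where the sum runs over the (finitely many, a.s.) jump times $\tau$ of $X$ in $[0,T]$ and the $j=X(t)$ term contributes zero. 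Establishing this formula is the one non-routine step; once in hand, everything else is reorganization.

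Taking $\tfrac1n\log$ of the product and using $\rho_t(i)=\tfrac1n\#\{k:X_k(t)=i\}$, the continuous integral collects across particles into
\[
\frac1n\sum_{k=1}^n \sum_{j=1}^J Q_{X_k(t),j}\bigl(e^{\xi_t(j)-\xi_t(X_k(t))}-1\bigr) = \sum_{i,j=1}^J \rho_t(i)Q_{ij}\bigl(e^{\xi_t(j)-\xi_t(i)}-1\bigr)=\cH(\rho_t,\xi_t),
\]
reproducing the $\cH$-term of $G(\rho,\xi)$ after integration in $t$.

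For the jump sum, integration by parts per particle (valid because $\xi\in C^1$ and $X_k$ is c\`adl\`ag piecewise constant) yields
\[
\sum_{\tau}\bigl[\xi_\tau(X_k(\tau))-\xi_\tau(X_k(\tau^-))\bigr] = \xi_T(X_k(T)) - \xi_0(X_k(0)) - \int_0^T \dot\xi_t(X_k(t))\,dt.
\]
Averaging over $k$ produces $\dual{\xi_T}{\rho_T}-\dual{\xi_0}{\rho_0}-\int_0^T\dual{\dot\xi_t}{\rho_t}\,dt$, which is precisely the distributional pairing $\int_0^T\dual{\xi_t}{\dot\rho_t}\,dt$ used in the definition of $G$ (and of the BV norm $\|\rho\|_{TV}$). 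Combining this with the previous identity gives $\tfrac1n\log(d\bP^{(n)}_\xi/d\bP^{(n)})(\rho) = G(\rho,\xi)$, as claimed.
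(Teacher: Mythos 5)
Your proof is correct and takes essentially the same route as the paper's: factorize the $n$-particle density by independence, invoke the single-particle Girsanov formula for the tilted jump process (the paper cites Kipnis--Landim, whose boundary-term form is exactly what your per-particle integration by parts turns the jump-sum form into), collect terms into $\cH(\rho_t,\xi_t)$, and pass to $\int_0^T\langle\xi_t,\dot\rho_t\rangle\,dt$ via partial integration using that $\dot\rho$ is a finite measure for $\bP^{(n)}$-a.e.\ $\rho$.
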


\begin{proof}
Proposition~7.3 of the Appendix~1 of~\cite{Kipnis1999} gives that for any c\`adl\`ag function $x:[0,T]\to{\{1,\hdots,J\}}$, 
\begin{equation}
  \log \frac{\mathrm d\bP_\xi}{\mathrm d\bP}(x) = 
  \xi_T(x_{T-}) - \xi_0(x_{0}) - \int_0^T e^{-\xi_t(x_t)}\Big[(\partial_t + \Q)e^{\xi_t}\Big](x_t)\, dt. \label{eq:app nonlinear generator}
\end{equation}
We calculate for $i\in{\{1,\hdots,J\}}$ that (using convention~\eqref{eq:Markov matrix})
\begin{equation*}
e^{-\xi_t(i)}(\Q e^{\xi_t})(i) = \sum_{j=1}^J Q_{ij}e^{\xi_t(j)-\xi_t(i)}=\sum_{j=1}^J Q_{ij}\big(e^{\xi_t(j)-\xi_t(i)}-1\big)= \cH\big(\Indicator_{x},\xi_t\big),
\end{equation*}
and $e^{-\xi_t}\partial_t\big(e^{\xi_t}\big) = \dot \xi_t$.
Therefore, for a c\`adl\`ag function $x:[0,T]\to{\{1,\hdots,J\}}$,
\begin{align*}
\log \frac{\mathrm d\bP_\xi}{\mathrm d\bP}(x)&=
\xi_T(x_{T-}) - \xi_0(x_{0}) - \int_0^T e^{-\xi_t(x_t)}\big(\partial_t e^{\xi_t}\big)(x_t)\, dt
- \int_0^T \cH(\Indicator_{x_t},\xi_t)\, dt\\
&= \dual{\xi_T}{\Indicator_{x_{T-}}} - \dual{\xi_0}{\Indicator_{x_{0}}} - \int_0^T \dual{\dot \xi_t}{\Indicator_{x_t}}\, dt
- \int_0^T \cH(\Indicator_{x_t},\xi_t)\, dt.
\end{align*}
Since the time evolution of the $n$ processes is independent, and the Radon-Nikodym derivates below are invariant under the push-forward $(x_1,\hdots,x_n)\mapsto\frac1n \sum_{k=1}^n \Indicator_{x_k}=:\rho$, we have
\begin{align*}
\frac1n \log \frac{\mathrm d\bP^{(n)}_\xi}{\mathrm d\bP^{(n)}}(\rho)&=
\frac1n \log \prod_{k=1}^n\frac{\mathrm d\bP_\xi}{\mathrm d\bP}(x_k)
= \frac1n \sum_{k=1}^n\log \frac{\mathrm d\bP_\xi}{\mathrm d\bP}(x_k)\\
&= \dual{\xi_T}{\rho_{T-}} - \dual{\xi_0}{\rho_{0}} - \int_0^T \dual{\dot \xi_t}{\rho_t}\, dt
- \int_0^T \cH(\rho_t,\xi_t)\, dt.
\end{align*}
With probability one, each process only jumps a finite number of times, and therefore $\bP^{(n)}$-almost every $\rho$ is piecewise constant, with a finite number of jumps. Therefore the time derivative $\dot \rho$ is a finite measure, so that we can apply partial integration to obtain
\[
\frac1n \log \frac{\mathrm d\bP^{(n)}_\xi}{\mathrm d\bP^{(n)}}(\rho)
= \int_0^T \Big[\dual{\xi_t}{\dot \rho_t}- \cH(\rho_t,\xi_t)\Big]\, dt
= G(\rho,\xi).
\]
\end{proof}

\bigskip

We now continue with the proofs of the upper and lower bounds. 

\begin{proof}[Proof of Lemma~\ref{lemma:lowerbound}, the lower bound]
Fix an open set $O\subset \Theta$ and $\delta>0$. Choose $\rho\in \Theta$, smooth as a function from $[0,T]$ into $\R^J$, such that 
\[
I(\rho)\leq \delta +\inf_O I .
\]
Define $\xi\in C^1([0,T];\R^J)$ to achieve the supremum in~\eqref{eq:lemma:IG}.

Choose $\e>0$ such that the $\R^J$-ball $B(\rho,\e)$ is contained in $O$, so that 
\begin{align}
\Prob(\rho^{(n)}\in O) & \geq \Prob\big(\rho^{(n)}\in B(\rho,\e)\big) = \Expectation^{(n)}(\Indicator_{B(\rho,\e)}) \notag\\
  &= \Expectation^{(n)}_\xi\left(\Indicator_{B(\rho,\e)}\frac{\mathrm d\bP^{(n)}}{\mathrm d\bP^{(n)}_\xi}\right)
\geq \left(\mathop{\hbox{$\bP^{(n)}_\xi$-ess\,inf}}\limits_{B(\rho,\e)} \frac{\mathrm d\bP^{(n)}}{\mathrm d\bP^{(n)}_\xi}\right) \label{eq:ldp lower bound est1}
\Expectation^{(n)}_\xi\left(\Indicator_{B(\rho,\e)}\right).
\end{align}
Since $\xi$ achieves the optimum in~\eqref{eq:lemma:IG}, we have for all $g$ that 
\begin{align*}
0 &= \langle D_\xi G(\rho,\xi),g\rangle \\
  &= \int_0^T\! \big\{ \dual{g_t }{\dot \rho_t}
     - \dual{D_\xi \cH(\rho_t,\xi_t)}{g_t}\big\}\,dt\\
  &= \int_0^T\! \bigg\{ \dual{g_t }{\dot \rho_t}
     - \sum_{i,j=1}^J \rho_t(i)Q_{ij}e^{\xi_t(j)-\xi_t(i)}\big(g_t(j)-g_t(i)\big)\bigg\}\,dt\\
  &=\int_0^T\! \big\{ \dual{g_t }{\dot \rho_t}
  - \dual{\Q_\xi(t)g_t}{\rho_t}\big\}\,dt.
\end{align*}
From this it follows that $\xi$ is chosen such that $\Q_\xi$ generates the process described by $\rho$. Therefore, by the law of large numbers, 
\begin{equation}
  \Expectation^{(n)}_\xi \left(\Indicator_{B(\rho,\e)}\right) \longrightarrow 1 \qquad\text{as }n\to\infty. \label{eq:ldp lower bound est2}
\end{equation}

Since $\bP^{(n)}_\xi$-almost every $\hat \rho\in B(\rho,\e)$ is of the form $\hat \rho = \frac1n \sum_{k=1}^n\Indicator_{x_k}$, Lemma~\ref{lemma:Girsanov} gives
\begin{align}
\frac1n \log \frac{\mathrm d\bP^{(n)}}{\mathrm d\bP^{(n)}_\xi}(\hat \rho) 
&= -G(\hat \rho,\xi) = -G(\rho,\xi) + G(\rho-\hat\rho,\xi) \notag \\
&\geq -I(\rho) - \sup_{\hat\rho\in B(\rho,\e)} |G(\rho-\hat\rho,\xi)| \notag\\
&\geq -\inf_O I - \delta - \sup_{\hat\rho\in B(\rho,\e)} |G(\rho-\hat\rho,\xi)|. \label{eq:ldp lower bound est3}
\end{align}
Here, the last term converges to zero as $\e\to0$, since for fixed $\xi$ the mapping $\rho\mapsto G(\rho,\xi)$ is continuous.
Taking the limit first as $\e\to0$ and then $\delta\to 0$ the claim follows from \eqref{eq:ldp lower bound est1},\eqref{eq:ldp lower bound est2} and \eqref{eq:ldp lower bound est3}. 
\end{proof}

\begin{proof}[Proof of Lemma~\ref{lemma:upperbound}, the upper bound]
Fix a compact set $C\subset\Theta$. First we show that
\begin{equation}
\label{est:upperboundintermediate}
\limsup_{n\to\infty} \frac1n \log \Prob(\rho^{(n)}\in C)\leq
-\sup_{\cup_{k} C_k\supset C} \min_k \sup_\xi \inf_{\rho\in C_k}
G(\rho,\xi),
\end{equation}
where the first supremum is taken over all finite coverings $\{C_k\}_{k=1}^K$ of $C$.

To prove this inequality, first note that for any such covering, $\Prob(\rho^{(n)}\in C)\leq \sum_k\Prob_n(\rho^{(n)}\in C_k)$, and therefore
\begin{equation}
  \limsup_{n\to\infty} \frac1n \log \Prob(\rho^{(n)}\in C)\leq \max_k \limsup_{n\to\infty} \frac1n \log \Prob(\rho^{(n)}\in C_k).
\label{eq:ldp upper bound est1}
\end{equation}
We then estimate, using Lemma~\ref{lemma:Girsanov},
\begin{equation}
\begin{split}
\Prob(\rho^{(n)}\in C_k) = \Expectation^{(n)}(\Indicator_{C_k}) 
= \Expectation^{(n)}_\xi\left(\Indicator_{C_k}\frac{\mathrm d\bP^n}{\mathrm d\bP^{(n)}_\xi}\right)
&\leq \mathop{\hbox{$\bP^{(n)}_\xi$-ess\,sup}}\limits_{C_k} \frac{\mathrm d\bP^{(n)}}{\mathrm d\bP^{(n)}_\xi}\\
&\leq  -\mathop{\hbox{$\bP^{(n)}_\xi$-ess\,inf}}\limits_{C_k} G(\cdot,\xi)
\leq -\inf_{C_k} G(\cdot,\xi).
\end{split}
\end{equation}
Taking the limit $n\to\infty$ and the infimum over $\xi$, we find
\[
\limsup_{n\to\infty} \frac1n \log \Prob(\rho^{(n)}\in C_k) \leq -\sup_\xi \inf_{C_k} G(\cdot,\xi).
\]
Therefore, \eqref{eq:ldp upper bound est1} can be further estimated by
\[
\limsup_{n\to\infty} \frac1n \log \Prob(\rho^{(n)}\in C) \leq -\max_k\sup_\xi \inf_{C_k} G(\cdot,\xi),
\]
and since this holds for all coverings $\{C_k\}$ the inequality~\eqref{est:upperboundintermediate} follows.

\bigskip

We now show that 
\begin{equation}
\label{ineq:minmax}
\sup_{\cup_{k} C_k\supset C} \min_k \sup_\xi \inf_{\rho\in C_k}
G(\rho,\xi)
\geq \inf_{\rho\in C} \sup_\xi G(\rho,\xi) = \inf_C I,
\end{equation}
which, together with \eqref{est:upperboundintermediate} will conclude the proof. Naturally, the last equality follows from~\eqref{eq:lemma:IG}. To show the inequality in \eqref{ineq:minmax}, fix a $\delta>0$ and let $\eta := \inf_{\rho\in C} \sup_\xi G(\rho,\xi)$. Observe that for each $\rho$ there exists an $\xi_\rho$ such that $G(\rho,\xi_\rho)\geq \eta-\delta$. Since $\rho\mapsto G(\rho,\xi)$ is continuous for any $\xi$, it follows that 
\[
\forall \rho\in C\;\exists \xi_\rho \; \exists O_\rho\ni \rho \text{ open such that }
\inf_{O_\rho} G(\cdot,\xi_\rho) \geq \eta-2\delta,
\]
from which we deduce that
\begin{equation}
\label{ineq:supinfG}
\forall \rho: \quad \sup_\xi \inf_{O_\rho} G(\cdot,\xi)\geq \eta-2\delta.
\end{equation}
Since $C$ is compact and $C\subset \bigcup_{\rho\in C}O_\rho$, there is a finite covering $C\subset \bigcup_{k=1}^K O_{\rho_k}$; by~\eqref{ineq:supinfG} we have
\[
\min_k \sup_\xi \inf_{O_{\rho_k}} G(\cdot,\xi)\geq \eta-2\delta.
\]
Trivially, we have
\[
\sup_{\cup_{k} C_k\supset C} \min_k \sup_\xi \inf_{O_{\rho_k}} G(\cdot,\xi)\geq \eta-2\delta.
\]
Since $\delta>0$ is arbitrary, the inequality~\eqref{ineq:minmax} follows.
\end{proof}

\bibliographystyle{alpha}
\bibliography{library}
\end{document}